\documentclass[lettersize,journal]{IEEEtran}
\usepackage{amsmath,amssymb,amsfonts}
\usepackage{algorithmic}
\usepackage{algorithm}
\usepackage{array}
\usepackage[caption=false,font=normalsize,labelfont=sf,textfont=sf]{subfig}
\usepackage{textcomp}
\usepackage{stfloats}
\usepackage{url}
\usepackage{verbatim}
\usepackage{graphicx}
\usepackage{xcolor}
\newtheorem{lemma}{\bf Lemma}
\usepackage{cite}
\newtheorem{assumption}{\bf Assumption}
\newtheorem{Remark}{\bf Remark}
\newenvironment{proof}{{\it \bf Proof:}}{\hfill $\blacksquare$\par}
\newtheorem{theorem}{\bf Theorem}
\begin{document}

\title{Efficiency of cooperation incentives in evolutionary population games under payoff-observation errors}

\author{Shengxian~Wang, Chengyu~Yin, Xiaojie~Chen, and  Ming~Cao,~\IEEEmembership{Fellow,~IEEE}
        % <-this % stops a space
\thanks{This research was supported by the National Natural Science Foundation of China (Grant Nos. 62406006 and 62473081).  (Corresponding authors: Xiaojie Chen and Ming Cao ).}
\thanks{S. Wang and C. Yin are with School of Computer and Information, Anhui Normal University, Wuhu 241002, China (e-mail: shengxian.wang@ahnu.edu.cn).}
\thanks{X. Chen is with School of Mathematical Sciences, University of Electronic Science and Technology of China, Chengdu 611731, China (e-mail: xiaojiechen@uestc.edu.cn).}
\thanks{M. Cao is with ENTEG, Faculty of Science and Engineering, University of Groningen, Groningen 9747 AG, The Netherlands (e-mail: m.cao@rug.nl).}}

% The paper headers
%\markboth{Journal of \LaTeX\ Class Files,~Vol.~14, No.~8, August~2021}%
%{Wang \MakeLowercase{\textit{et al.}}: Efficiency of cooperation incentives in evolutionary population games under payoff-observation errors}

%\IEEEpubid{0000--0000/00\$00.00~\copyright~2021 IEEE}
% Remember, if you use this you must call \IEEEpubidadjcol in the second
% column for its text to clear the IEEEpubid mark.

\maketitle

\begin{abstract}
Traditional studies on evolutionary dynamics of cooperation have concentrated on an idealized game setup free of payoff-observation errors.  However, in real-world scenarios, individuals frequently encounter errors when observing the payoffs of their opponents during game interactions, resulting from unintentional mistakes, such as data misrecording, overlooking critical details, and miscalculations. This gap between idealized error-free game models and the error-prone real-world game interactions leads to the lack of insight into the impact of payoff-observation errors  on the evolutionary dynamics of population games, in particular the efficiency of cooperation incentives. In this paper, we construct a research framework for population games with payoff-observation errors, which enables us to investigate the effects of errors on the evolutionary dynamics of cooperation in the evolutionary Prisoner's Dilemma game with combined incentives. To quantify the implementation costs of incentives in the presence of errors, we devise an index function and employ optimal control theory to derive the optimal incentive protocols.  Our theoretical and numerical results reveal that payoff-observation errors can lower the costs compared to error-free cases, and we also derive the theoretical conditions for these results. Finally, we formulate an optimization problem to explore the cost difference between the optimal incentive protocols with and without errors, and further design an algorithm to obtain the numerical solution that minimizes this difference.
\end{abstract}

\begin{IEEEkeywords}
Evolutionary game theory, optimal control theory, cooperative behavior, Hamilton-Jacobi-Bellman equation, payoff-observation errors.
\end{IEEEkeywords}

\section{Introduction}
\IEEEPARstart{C}{ooperation} is fundamental to the functioning of multi-agent systems, enabling both humans and autonomous agents, e.g. robots, to collectively address challenging tasks in complex environments~\cite{Ferber1999, KrausAI1977, MintzBCHAOS25, Nguyencyb20}. Despite its importance, fostering and sustaining cooperation among self-interested agents remains a significant challenge, particularly in multi-agent systems~\cite{Tangcyb14, VincentCUP, AxelrodBBNY, RandTCS, RiehlARC18, LiTCNS18, Tangtnse24, Tancyb25}. Over the past few decades, evolutionary game theory has proven to be a powerful mathematical framework for analyzing the dynamics of cooperation in large populations of agents~\cite{Shicyb25, GlaubitzPNAS24, Sunhb2022, HilbePNAS13, Zhangtnse25, ShiJtnse24, Dingcyb21}. A quintessential model within this framework is the evolutionary Prisoner's Dilemma game (PDG), a paradigm that elucidates the subtle interplay between prosocial cooperation and selfish defection, providing foundational insights into  understanding how cooperation emerges within gaming environments ~\cite{Sunpnas2022, Smith1982, Gintis2009, Zhutac2022,  LiNC2020, Liucyb24}.

In this framework, incentive-based control policy is frequently employed to sustain collective cooperation in such multi-agent systems~\cite{HanINTER15, Sasaki2012, SunTNSE2023, Riehl12018, Vasconcelos2013, Paarporn2018, Fang2019PRSA, Wang2022JRCS, WangCNSNS2019}. Institutions typically use the \emph{combined incentives} to encourage cooperation while discouraging defection~\cite{Wang2025TAC, Lupre2024}. 
For example, in an emissions trading system, companies that exceed their carbon quotas must either pay fines or purchase additional permits, while those that reduce their emissions below the allocated limit can sell their surplus permits for profit or receive tax incentives~\cite{Schmalensee2017}.
However, given the costs associated with providing such incentives~\cite{Duong2021, Duong24BMB,ChenInterface2015}, there is increasing interest in designing policies that not only promote cooperation but also minimize the total implementation cost. It is worth noting that previous studies considered fixed incentive values that were independent of the actual system state (i.e., the proportion of cooperators) during the evolutionary process, leading to suboptimal incentive schemes~\cite{ChenInterface2015}. Recent studies have explored optimal incentive schemes with minimal costs by fully considering system state information, with promising results via optimal control approaches. For instance, Wang~\emph{et al.} designed the optimal protocols for institutional rewards, punishments, and combined incentives~\cite{WangCNSNS2019, Wang2022JRCS, Wang2025TAC}, and these protocols ensure that the total cost for the system, from the initial cooperation state to the desired cooperation state, is minimized in both well-mixed and structured populations.

Recent studies on evolutionary game theory have increasingly incorporated factors such as noisy payoffs \cite{WangG2024PNAS, FujimotoY2023PNAS}, imperfect information \cite{WangX2023NC,Schmid2023NC}, and perceptual errors \cite{Zhang2024eLifE, Barfuss2022PRE} to better approximate realistic decision-making environments and to examine how uncertainty influences the dynamics of cooperation. However, most prior research on the emergence of cooperation assumes that agents make perfectly accurate decisions during the process of interactions. In practice, this assumption is often unrealistic, as they frequently commit operational mistakes, such as incorrectly recording or misjudging the payoffs of their counterparts, thereby leading to \emph{payoff-observation errors}. These errors arise due to various causes, including emotional bias, time pressure, and complex environments, and they are pervasive in real-world systems \cite{ChenInterface2015, Alventosa2021}.  For instance, misjudging a competitor's pricing strategy in a competitive market can result in suboptimal decisions and financial losses \cite{Camerer1992}. Similarly, 
 in clinical decision-making, physicians often estimate treatment success based on limited data, and they may
 systematically overestimate or underestimate true medical emergencies \cite{Patel2002}. In our framework, these perceptual inaccuracies are captured by two behaviorally interpretable parameters, which describe the extent to which an agent underestimates or overestimates the other agents' payoff during strategy updating. Despite these observations, it remains unclear how payoff-observation errors influence the emergence of cooperation and how incentive-based control policies can be effectively implemented in the error-prone environments.
 
In this paper, we investigate the role of incentive-based control in promoting cooperation within the context of PDG in the presence of payoff-observation errors, a common form of individual errors. Here, individual errors represent perception and implementation inaccuracies made by agents \cite{ChenInterface2015}.  We derive the dynamical equation governing the evolution of cooperation in this error-prone gaming environment, and analyze the stability of equilibria for the dynamical equation to determine the incentive level required to guide the system towards a desired cooperative state. Subsequently, we use optimal control theory \cite{Evans2005, Geering2007} to construct an index function that quantifies the implementation cost of incentives, and  derive the optimal incentive protocols leading to the minimal cost for the evolution of cooperation.  Furthermore, we report a somehow counterintuitive and surprising finding: we identify the theoretical conditions, under which the presence of payoff-observation errors results in a lower cumulative cost induced by the optimal incentive protocol compared to the case without error, and this theoretical finding is also supported by numerical calculations. Finally, we formulate an optimization problem to investigate under what conditions the optimal cost in the presence of errors can exceed that in the error-free scenario to the greatest extent, and design an algorithm to obtain the numerical solution. The main contributions of our work are listed as follows:
\begin{itemize}
	\item  We formulate a systematic framework to study the evolutionary dynamics of population games in game-theoretic environments with payoff-observation errors.
	\item By using the Hamilton-Jacobi-Bellman equation (HJB equation), we analytically derive the optimal incentive protocols with minimal cost in the presence of payoff-observation errors under weak selection. 
	\item We  extend the analysis to the non-weak selection case, and show numerically that the cumulative cost of the optimal protocol decreases monotonically as the strength of selection increases, while the optimal protocol consistently outperforms alternative control schemes in terms of implementation cost.
	\item We find that payoff-observation errors can reduce the implementation cost compared with error-free case, and we further derive the corresponding theoretical conditions for this finding.
	\item We formulate an optimization problem to explore the difference between the optimal incentive costs with and without errors, and design an algorithm to obtain the numerical solution that minimizes this difference.
\end{itemize}
The rest of this paper is organized as follows. In Section \ref{sec2}, we formalize the problem formulation. In Section \ref{sec3}, we present the theoretical analysis in a gaming environment with payoff-observation errors, including the existence and stability analysis of equilibria, optimal incentive protocol for cooperation, 
comparison analysis of optimal incentive protocols between with and without payoff-observation errors, and the impact of errors on the cumulative cost induced by the optimal incentive protocols. Then, numerical results are presented in Section \ref{sec4} to verify the obtained theoretical analysis. Finally, our conclusions are drawn in Section \ref{sec5}. 

\textbf{Notation:} The following notations are used throughout the paper. We denote the set of non-negative and strictly positive integer numbers by $\mathbb{N}$ and $\mathbb{N_{+}}$, respectively. Let $\mathbb{R}=(-\infty, +\infty)$ be the real number set, and $\mathbb{R}^m$ represents the space of real-valued $m$-dimensional vectors. For any vector $x\in\mathbb{R}^m$, $\Vert x\Vert_2$ denotes its $\text{L2}$ norm (Euclidean norm).

\begin{figure*}[!t]
	\begin{center}
		\includegraphics[width=6in]{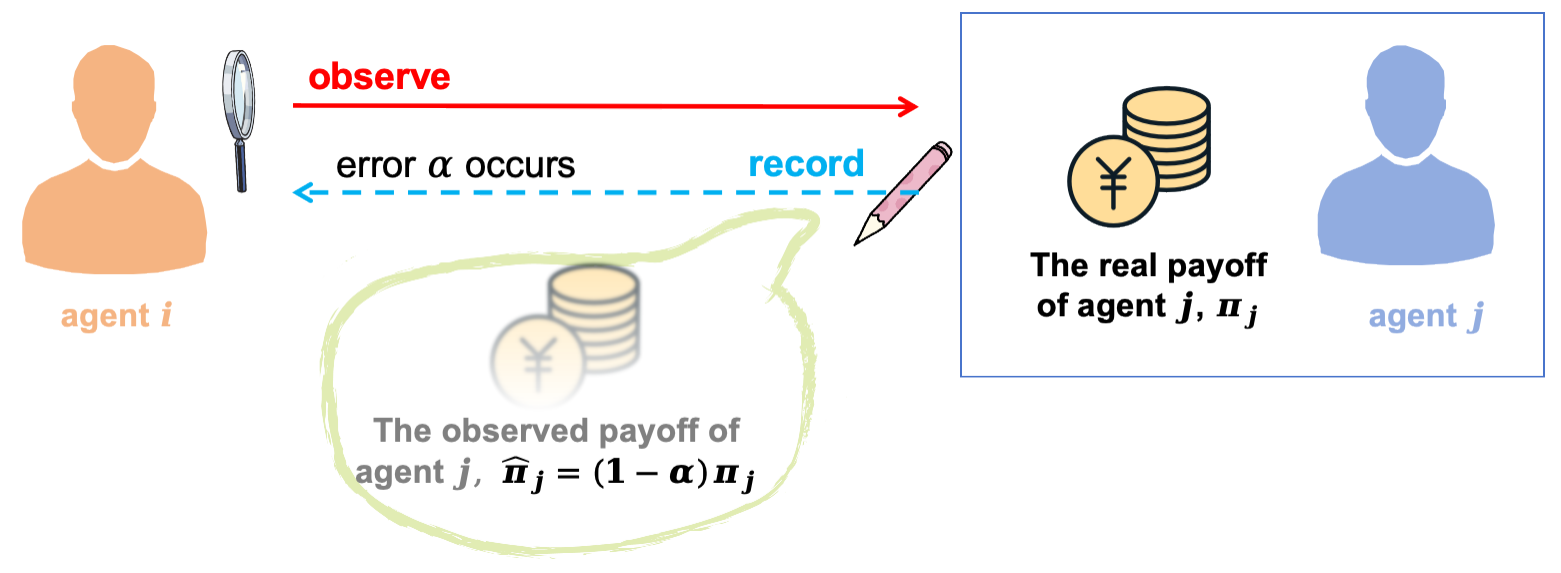}
		\caption{Evolutionary Prisoner's Dilemma Game with the combined incentive in a gaming environment with payoff-observation errors. This figure illustrates the case where agent $i$ attempts to observe the payoff of agent $j$, but the operational factors (e.g., imprecisely recording the payoff value) may cause a discrepancy between the observed payoff $\Pi_{j}$ and real payoff $\hat{\Pi}_{j}$ of agent $j$, where $\hat{\Pi}_{j}=(1-\alpha)\Pi_{j}$ and $\alpha \in[-1, 1]$ represents the observed error parameter.}\label{fig1}
	\end{center}
\end{figure*}

\section{Problem formulation}\label{sec2}
\subsection{Prisoner's dilemma game with the combined incentive}
We study evolutionary games in a well-mixed population of $n\in\mathbb{N_{+}}$ agents, and all of these agents are denoted by the set $\mathcal{V}=\{1, \dots, n\}$. Each agent plays the evolutionary PDG with its neighbors, who chooses either to be a cooperator by adopting cooperation ($C$) or to be a defector by selecting defection ($D$), and it receives payoffs based on the following payoff matrix:
\begin{equation}
	A(u)=\begin{bmatrix}
		b-c&-c\\
		b&0
	\end{bmatrix}
	+
	\begin{bmatrix}
		a_{11}(u) &a_{12}(u)\\
		a_{21}(u)&a_{22}(u)
	\end{bmatrix}.
\end{equation}\label{eq1}
This matrix consists of two submatrices. The first submatrix represents the basic payoff from each interation without additional regulatory mechanisms, while the second one shows the combined incentive value provided by an institution. Specifically, in the first submatrix, corresponding to a so-called `Donation Game', 
$b-c$, $-c$, $b$, and $0$  are  respectively the payoffs for $C$-against-$C$, $C$-against-$D$, $D$-against-$C$, and $D$-against-$D$, where $b>c>0$. It can be observed  that  defection for each agent is the dominant choice in this game. However, if both agents choose $C$, then they could  yield a higher  payoff. But when both choose to defect, both get nothing. This inevitably leads to a social dilemma of cooperation. 

The second term is called the incentive matrix. To tackle the dilemma mentioned above,  we consider a top-down incentive-providing institution to implement combined incentives for promoting cooperation. In this matrix, $a_{11}(u)=pu_{R}$, $a_{12}(u)=pu_{R}$, $a_{21}(u)=(1-p)u_{P}$, and $a_{22}(u)=(1-p)u_{P}$ are the values of incentives received from the institution for $C$-against-$C$, $C$-against-$D$, $D$-against-$C$, and $D$-against-$D$ in the game respectively, where $p\in[0, 1]$ is a rewarding preference for the incentive-providing institution, and each $p$-value corresponds to a form of the combined incentive. $u_{R}$ and $u_{P}$ respectively represent the per capita rewarding and punishing incentives.  For convenience, we assume $u_{P}=u_{R}=u$ in this study. Specifically,  when playing with a $C$-agent,  the payoffs of each cooperator and defector are respectively
$\pi_{C}^C=b-c+a_{11}(u)=b-c+pu_{R}$
and
$\pi_{D}^C=b+a_{21}(u)=b-(1-p)u_{P}$, where $C$-agent is the one plays strategy $C$.
By contrast, when playing against a $D$-agent, the payoffs of each cooperator and  defector are respectively
$\pi_C^D=-c+a_{12}(u)=-c+pu_{R}$
and
$\pi_D^D=0+a_{22}(u)=-(1-p)u_{P}$, where $D$-agent is the one plays strategy $D$.

\begin{Remark}\label{Remark1}
	$p=1$ and $p=0$ separately represent pure reward and punishment.
\end{Remark}

\subsection{Evolutionary dynamics of cooperation in a gaming environment with payoff-observation errors}

In a well-mixed population of size $n$, if there are $k\leq n$ cooperators and $n-k$ defectors, then the average payoffs  of cooperators and defectors can be respectively  written as  \cite{Schuster1983, Hofbauer1998}
\begin{equation}\label{eq2}
	\begin{aligned}
		\Pi_C&=\frac{\pi_{C}^C(k-1)+\pi_C^D(n-k)}{n-1}\\
		&=\frac{(b-c+pu_{R})(k-1)-(c-pu_{R})(n-k)}{n-1},
	\end{aligned}
\end{equation}
and 
\begin{equation}\label{eq3}
	\begin{aligned}
		\Pi_D&=\frac{\pi_{D}^Ck+\pi_D^D(n-1-k)}{n-1}\\
		&=\frac{\big[b-(1-p)u_{P}\big]k-(1-p)u_{P}(n-1-k)}{n-1}.
	\end{aligned}
\end{equation}

As the population game proceeds in time, each  agent has the same chance to update its strategy by comparing the above-mentioned average payoff with the one obtained by a neighboring agent. Accordingly, at each time step two neighboring $i$ and $j$ agents with $i, j \in\mathcal{V}$ are randomly selected, and calculate their  payoffs difference  $\Pi_{j}-\Pi_{i}$. Then, agent $i$ adopts the strategy of the neighboring $j$ with
the probability  defined by the Fermi function 
$
W_{i\rightarrow j}=\frac{1}{1+e^{-\omega(\Pi_{j}-\Pi_{i})}}
=W_{i\rightarrow j}(0)+\frac{\textrm{d}W_{i\rightarrow j}(0)}{\textrm{d}\omega}\omega+O(\omega^2)\approx\frac{1}{2}+\omega\frac{\Pi_{j}-\Pi_{i}}{4},
$
where $\omega\in[0,1]$ measures the strength of selection~\cite{Szabo1998}.  In the $\omega\rightarrow 0$ limit, called the weak selection limit,  agent $i$ will stay with its own strategy or  imitate the strategy of neighbor $j$ with equal probabilities.
When $\omega>0$, the more successful neighbor $j$ is, the more likely it is that agent $i$ will adopt the strategy of neighbor $j$.  It is worth pointing out that in this study, we focus on the case of weak selection to have some theoretical predictions. We stress that the weak selection assumption in our model is reasonable, because the payoff contribution is merely a part of the elements impacting the learning process and it makes sense to quantify the effect size relative to the case that $\omega$ is sufficiently small but nonzero.

Although each agent updates its strategy mainly based on the payoff difference between itself and a randomly selected neighbor, it  may have an incorrect observation of the neighbor's payoff (see Fig.~\ref{fig1}),  which affects its strategy update rule. 
Assume that during the strategy update process, $C$-agent observes a deviation in the payoff of $D$-agent, with the observed error parameter given by $\alpha=\frac{\Pi_{D}- \hat{\Pi}_{D}}{\Pi_{D}}\in[-1,1]$, where $\hat{\Pi}_{D}=(1-\alpha)\Pi_{D}$ is the payoff observed by $C$-agent for $D$-agent.  Similarly, $D$-agent observes a deviation in the payoff of $C$-agent, with the observed error parameter given by $\beta=\frac{\Pi_{C}-\hat{\Pi}_{C}}{\Pi_{C}}\in[-1,1]$, where $\hat{\Pi}_{C}=(1-\beta)\Pi_{C}$ is the payoff observed by $D$-agent for $C$-agent. Therefore,  $C$-agent adopts the strategy of  $D$-neighbor with the probability
\begin{equation}\label{eq4}
	\begin{aligned}
		\widetilde{W}_{C\rightarrow D}=\frac{1}{1+e^{-\omega[\hat{\Pi}_{D}-\Pi_{C}]}}\approx\frac{1}{2}+\omega\frac{\hat{\Pi}_{D}-\Pi_{C}}{4},	
	\end{aligned}
\end{equation}
and 
$D$-agent adopts the strategy of $C$-neighbor with the probability
\begin{equation}\label{eq5}
	\begin{aligned}
		\widetilde{W}_{D\rightarrow C}=\frac{1}{1+e^{-\omega[\hat{\Pi}_{C}-\Pi_{D}]}}\approx\frac{1}{2}+\omega\frac{\hat{\Pi}_{C}-\Pi_{D}}{4}.
	\end{aligned}
\end{equation}

For large $n\rightarrow \infty$, we derive the mean-field equation of the dynamical changes of the fraction of cooperators, which is given by  (for details see
Appendix A)~\cite{Traulsen2005,Traulsen2009}:
\begin{equation}\label{eq6}
	\begin{aligned}
		\dot{x}(t)&=x(1-x)\widetilde{W}_{D\rightarrow C}-x(1-x)\widetilde{W}_{C\rightarrow D}\\
		&=x(1-x)\Big[\frac{1}{1+e^{-\omega(\hat{\Pi}_{C}-\Pi_{D})}}-\frac{1}{1+e^{-\omega(\hat{\Pi}_{D}-\Pi_{C})}}\Big]\\
		&=x(1-x)\frac{e^{-\omega(\hat{\Pi}_{D}-\Pi_{C})}[1-e^{-\omega(-\hat{\Pi}_{D}+\Pi_{C}+\hat{\Pi}_{C}-\Pi_{D})}]}{[1+e^{-\omega(\hat{\Pi}_{D}-\Pi_{C})}][1+e^{-\omega(\hat{\Pi}_{C}-\Pi_{D})}]} \\
		&=x(1-x)\psi(\omega)e^{-\omega \Delta},
	\end{aligned}
\end{equation}
where $\psi(\omega)=\frac{e^{-\omega(\hat{\Pi}_{D}-\Pi_{C})}}{[1+e^{-\omega(\hat{\Pi}_{D}-\Pi_{C})}][1+e^{-\omega(\hat{\Pi}_{C}-\Pi_{D})}]}$,  $\Delta=-\hat{\Pi}_{D}+\Pi_{C}+\hat{\Pi}_{C}-\Pi_{D}=(2-\beta)\Pi_{C}-(2-\alpha)\Pi_{D}$, and  $x \in[0,1]$ denotes the  proportion of cooperators in the whole population. Combining \eqref{eq2} and \eqref{eq3}, 
the above-mentioned system equation in the limit of $\omega\rightarrow 0$ can be rewritten as
\begin{equation}\label{eq7}
	\dot{x}(t)=F(x, u, t)=\Gamma x(1-x),
\end{equation}
where $\Gamma=\frac{\omega}{4}\Delta=\frac{\omega}{4}[(2-\beta)\Pi_{C}-(2-\alpha)\Pi_{D}]=\frac{\omega}{4}[(\alpha-\beta)bx-(2-\beta)c+[2-\alpha+(\alpha-\beta)p]u]$.

\begin{Remark}\label{Remark2}
	It is worth noting that the error parameters $\alpha$ and $\beta$ are  restricted to the interval $[-1,1]$. This is because although individuals may make observational errors due to operational mistakes, these errors are generally small and should remain within a reasonable range. We thus assume the observed payoff will not exceed twice the real payoff of targeted agent, as this would allow agents to recognize that the observation is problematic.
\end{Remark}

\begin{Remark}\label{Remark3}
From Eqs.~\eqref{eq6} and ~\eqref{eq7}, one can see that both for finite $\omega$ and under weak selection $\omega\rightarrow 0$, the system admits only the two boundary equilibria $x^* = 0$ and $x^* = 1$. The existence and stability of interior equilibria are determined solely by $\Delta$. The selection strength $\omega$ affects only the speed of the system evolution, but does not alter the qualitative phase structure or the location of equilibria. An interior equilibrium arises only in the degenerate case $\Delta = 0$; otherwise, the direction of the flow and the stability of the boundary equilibria are entirely governed by the sign of $\Delta$.
\end{Remark}

\begin{assumption}\label{assumption1}
The population size $n$, the payoff matrix $A(u)$, the strength of selection $\omega$, and the error parameters $\alpha$ and $\beta$ are time-invariant. In contrast, the proportion of cooperators $x$ in the population changes with time $t$. 
\end{assumption}

\subsection{Optimal control problem}

Since providing incentives is costly for institutions, the goal of optimization problems for the combined incentive is to minimize the implementation cost of incentives required for supporting cooperation, and find the explicit expressions of optimal incentive protocol $u^\ast$. To this end, we formulate an optimal control problem for the combined incentive as follows: 
\begin{eqnarray}\label{eq8}
	&\min\,\,&J=\frac{1}{2}\int^{t_{f}}_{t_0}\|G(x, u, t)\|_2^2\textrm{d}t, \nonumber\\
	& s.t.&\quad  \left\{\begin{array}{lc}
		\dot{x}(t)=F(x, u, t),  \\
		x(t_0)=x_{0}, \\
		x(t_{f})=1-\delta,
	\end{array}\right.
\end{eqnarray}
where 
\begin{eqnarray}\label{eq9}
	\begin{aligned}
		G(x, u, t)&=n(n-1) xpu_R+n(n-1)(1-x)(1-p)u_P\\
		&=n(n-1)u [px+(1-p)(1-x)].
	\end{aligned}
\end{eqnarray}
Here, $(n-1)xpu_R$ represents the amount of rewarding incentive that each agent receives from the institution  after interacting with all of its $C$-neighbors, and $(n-1)(1-x)(1-p)u_P$ denotes the amount of punishing incentive imposed on each agent after interacting with all of its $D$-neighbors, where $(n-1)x$ and $(n-1)(1-x)$ are the average numbers of $C$-neighbors and $D$-neighbors for each agent with $n-1$ neighbors, respectively. Consequently,  the incentive cost incurred by the institution for an agent at time $t$ is given by $(n-1)[xpu_R+(1-x)(1-p)u_P]$. When all $n$ agents complete their interactions, the total cost of incentives at time $t$ is $n(n-1)[xpu_R+(1-x)(1-p)u_P]$.

\begin{figure}[!t]
	\begin{center}
		\includegraphics[width=3.5in]{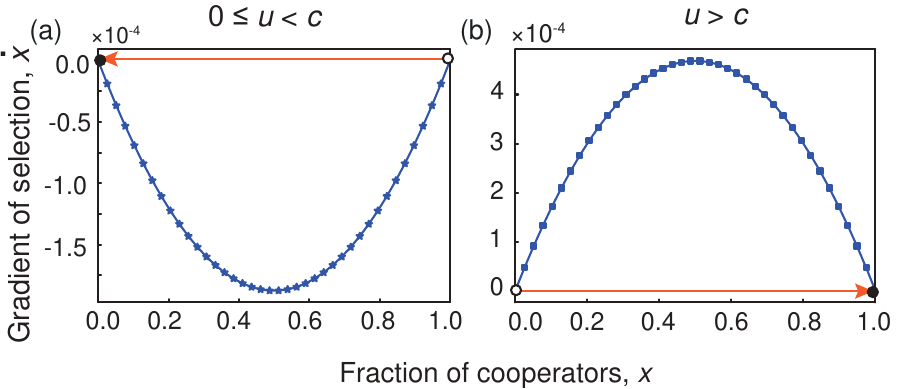}
		\caption{The gradient of selection in dependence on the proportion of cooperators in the whole population under the condition  $\alpha=\beta$. Here, panel ($a$) satisfies  $0\leq u<c$, while the other case is shown in panel ($b$). Solid (open) circles represent stable (unstable) equilibria. 
			The values of $u$ are $u=0.8$ ($a$) and  $u=1.5$ ($b$). Parameters: $n=100$, $b=2$, $c=1$, $\alpha=0.5$, $p=0.5$, $x_{0}=0.5$, $\beta=0.5$, and $\omega=0.01$.}\label{fig2}
	\end{center}
\end{figure}

For the optimal control problem  mentioned above, the per capita incentive $u$ is regarded as the control variable, and the cumulative cost $J$ as the objective function. The cost function $J$ describes the cumulative cost on average from the initial time $t_0$  to the terminal time $t_{f}$ for the dynamical system \eqref{eq7}.  For convenience,  the initial time is set as $0$ (i.e., $t_0=0$).  Herein, we denote the initial cooperation state (i.e., the initial proportion of cooperators) in the population by $x(t_0)=x_{0}$, and denote the desired or terminal cooperation level (i.e., the desired proportion of cooperators) in the population by $x(t_{f})$.  With the aim of exploring the optimally combined incentive protocol $u^*$ and the corresponding optimal function $J^\ast=\frac{1}{2}\int^{t_{f}}_{t_0}\|G(x, u^{\ast}, t)\|_2^2\textrm{d}t$ over the time interval $[t_0, t_{f}]$, we now impose two assumptions for the optimal control problem.
\begin{assumption}\label{assumption3}
	The terminal time  $t_{f}$ is  not fixed.
\end{assumption}

\begin{assumption} \label{assumption4}
	The terminal state $x(t_{f})$ is set to be $1-\delta$ (i.e., $x(t_{f})=1-\delta>x_{0}$), where $\delta$ is the parameter that determines the expected cooperation level at $t_{f}$.
\end{assumption}

Assumption \ref{assumption3} stipulates that the optimal function $J^\ast$ is independent of $t$, that is,  $\frac{\partial J^\ast}{\partial t}=0$. Assumption \ref{assumption4} means that  the desired cooperation level $x(t_{f})$ can be reached at the terminal time $t_{f}$. Therefore, the quantity $J$ can be regarded as the objective functional of calculating the optimally combined incentive protocol $u^\ast$ with the lowest  cumulative cost.

\section{THEORETICAL ANALYSIS}\label{sec3}

\subsection{Existence and stability analysis of equilibria}
In this subsection, we analyze the existence and asymptotic stability of equilibria for the system equation obtained in  \eqref{eq7} under three different scenarios regarding the 
relationship between $\alpha$ and $\beta$, which are respectively $\alpha=\beta$, $\alpha>\beta$, and $\alpha<\beta$. The related results of $\alpha>\beta$ are presented in Lemma \ref{lem1}, those of $\alpha=\beta$ in Lemma \ref{lem2}, and  those of $\alpha<\beta$ in Lemma \ref{lem3}.

%\noindent \begin{theorem}\label{thm1}
	\begin{lemma}\label{lem1}
		For $\alpha=\beta$ and any $x_0\in (0, 1)$, the following statements hold:
		\begin{enumerate}
			\item If $0\leq u< c$, there  exist two equilibria, that is, $x^\ast=1$ and $x^\ast=0$, where  $x^\ast=1$ is unstable and $x^\ast=0$ is asymptotically stable.
			\item If $u> c$, there  exist two equilibria, that is, $x^\ast=1$ and $x^\ast=0$, where $x^\ast=1$ is asymptotically stable and $x^\ast=0$ is unstable.
		\end{enumerate}
	\end{lemma}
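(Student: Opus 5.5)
With $\alpha=\beta$ the cooperator-density term drops out of $\Gamma$. The plan is to reduce the weak-selection dynamics \eqref{eq7} to a logistic-type equation with a constant coefficient, and then read off existence and stability of the equilibria by linearization.

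\textbf{Step 1 (simplifying $\Gamma$).} Substituting $\alpha=\beta$ into the expression for $\Gamma$ below \eqref{eq7}, the term $(\alpha-\beta)bx$ and the $p$-dependent part $(\alpha-\beta)pu$ both vanish. This leaves
\begin{equation*}
\Gamma=\frac{\omega}{4}\left[(2-\alpha)u-(2-\alpha)c\right]=\frac{\omega}{4}(2-\alpha)(u-c).
\end{equation*}
Since $\alpha\in[-1,1]$, we have $2-\alpha\in[1,3]$, which is strictly positive. For $\omega>0$, the constant $\Gamma$ therefore has the same sign as $u-c$ and is independent of $x$. The system becomes $\dot x=\Gamma x(1-x)$ with constant $\Gamma\neq 0$ whenever $u\neq c$.

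\textbf{Step 2 (equilibria).} Setting $\dot x=0$ gives $x(1-x)=0$, since $\Gamma\neq 0$. Hence $x^\ast=0$ and $x^\ast=1$ are the only equilibria in $[0,1]$ in both cases of the lemma, and no interior equilibrium exists. This is consistent with Remark~\ref{Remark3}.

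\textbf{Step 3 (stability).} Set $f(x)=\Gamma x(1-x)$, so that $f'(x)=\Gamma(1-2x)$, with $f'(0)=\Gamma$ and $f'(1)=-\Gamma$.
\begin{itemize}
\item If $0\le u<c$, then $\Gamma<0$. Thus $f'(0)<0$ and $x^\ast=0$ is asymptotically stable, while $f'(1)>0$ and $x^\ast=1$ is unstable.
\item If $u>c$, the signs reverse: $x^\ast=1$ is asymptotically stable and $x^\ast=0$ is unstable.
\end{itemize}
To make the statement hold for any $x_0\in(0,1)$ and not only locally, I would add a one-line sign argument. On $(0,1)$ we have $x(1-x)>0$, so $\dot x$ has the constant sign of $\Gamma$, and $x(t)$ is strictly monotone and bounded. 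The trajectory therefore converges to the attracting endpoint. Alternatively, one can write the explicit logistic solution $x(t)=x_0/\bigl(x_0+(1-x_0)e^{-\Gamma t}\bigr)$ and read off the limit directly.

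The main obstacle is minor: it is making sure the weak-selection reduction \eqref{eq7} is the system being analyzed, and that $\omega>0$ is used so that $\Gamma$ is nonzero. One could also note that with finite $\omega$ the sign of $\Delta=(2-\alpha)(u-c)$ governs the dynamics in the same way, via \eqref{eq6} and $\psi(\omega)>0$. This gives the same conclusion, in line with Remark~\ref{Remark3}.
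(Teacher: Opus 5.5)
Your proof is correct and takes essentially the same route as the paper. With $\alpha=\beta$ the paper also reduces the dynamics to $\frac{\omega(2-\beta)(u-c)}{4}x(1-x)$, identifies $x^\ast=0$ and $x^\ast=1$ as the only equilibria, and reads off stability from the signs of $\frac{\partial F}{\partial x}=\pm\frac{\omega(2-\beta)(u-c)}{4}$ at the two endpoints. Your monotonicity argument (or the explicit logistic solution) is a small but useful addition: it justifies the ``for any $x_0\in(0,1)$'' part of the statement, which the paper's purely local linearization leaves unaddressed.
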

	\noindent \begin{proof}
		Solving $F(x, u, t)=0$, it is straightforward to check that Eq.~\eqref{eq7} admits two equilibria: $x^\ast=1$ and  $x^\ast=0$. The partial derivative of  $F(x, u, t)$  with respect to $x$ is given by
		\begin{equation}\label{eq10}
			\begin{aligned}
				\frac{\partial F}{\partial x}=\frac{\omega(2-\beta)(u-c)}{4}(1-2x),
			\end{aligned}
		\end{equation}
		which at these two equilibria, yields that  $\frac{\partial F}{\partial x}\big|_{x^\ast=1}=-\frac{\omega(2-\beta)(u-c)}{4}$ and $\frac{\partial F}{\partial x}\big|_{x^\ast=0}=\frac{\omega(2-\beta)(u-c)}{4}$.
		For $0\leq u<c$, we obtain that $\frac{\partial F}{\partial x}\big|_{x^\ast=1}>0$ and $\frac{\partial F}{\partial x}\big|_{x^\ast=0}<0$ all $t\geq0$, indicating that $x^\ast=1$ is unstable and  $x^\ast=0$ is asymptotically stable. Conversely, for $u>c$, we have $\frac{\partial F}{\partial x}\big|_{x^\ast=1}<0$ and $\frac{\partial F}{\partial x}\big|_{x^\ast=0}>0$ for all $t\geq0$, implying that  $x^\ast=1$ is asymptotically stable and $x^\ast=0$ is unstable.
	\end{proof}

	\begin{figure}[!t]
		\begin{center}
			\includegraphics[width=3.6in]{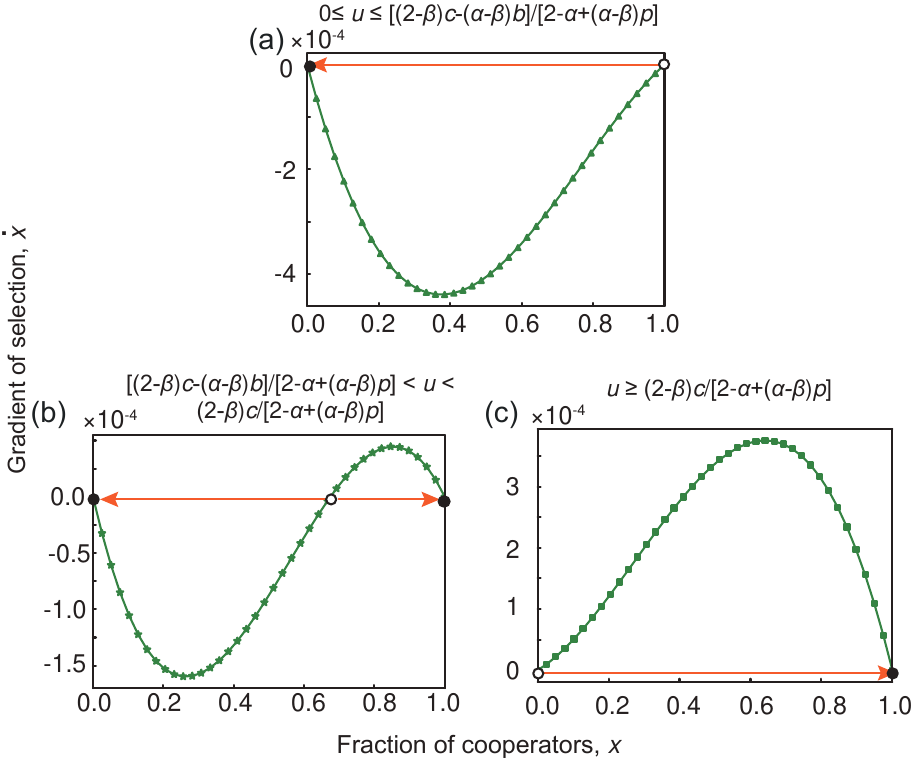}
			\caption{The gradient of selection in dependence on the proportion of cooperators in the whole population under the condition $\alpha>\beta$. Here, panel ($a$) satisfies  $0\leq  u\leq\frac{(2-\beta)c-(\alpha-\beta)b}{2-\alpha+(\alpha-\beta)p}$, panel ($b$) satisfies  $\frac{(2-\beta)c}{2-\alpha+(\alpha-\beta)p}<u<
				\frac{(2-\beta)c-(\alpha-\beta)b}{2-\alpha+(\alpha-\beta)p}$, and the another case is shown in panel ($c$).  Solid (open) circles represent stable (unstable) equilibria. 
				The values of $u$ are  $u=0.5$ ($a$), $u=0.8$ ($b$), and  $u=1.2$ ($c$). 
				Parameters: $n=100$, $b=2$, $c=1$, $\alpha=0.5$, $p=0.5$, $x_{0}=0.5$, $\beta=0.1$, and $\omega=0.01$.}\label{fig3}
		\end{center}
	\end{figure}

	\begin{lemma}\label{lem2}
		For $\alpha>\beta$ and any  $x_0\in (0, 1)$, the following statements hold:
		\begin{enumerate}
			\item If $0\leq u\leq\frac{(2-\beta)c-(\alpha-\beta)b}{2-\alpha+(\alpha-\beta)p}$,  there exist two equilibria, that is, $x^\ast=0$ and $x^\ast=1$, where $x^\ast=0$ is asymptotically stable and $x^\ast=1$ is unstable.
			\item If $\frac{(2-\beta)c-(\alpha-\beta)b}{2-\alpha+(\alpha-\beta)p}<u<\frac{(2-\beta)c}{2-\alpha+(\alpha-\beta)p}$, there exist three equilibria, that is, $x^\ast=0$, $x^\ast=1$, and $x^\ast=\bar{x}$, where $x^\ast=0$ and $x^\ast=1$ are asymptotically stable, while $x^\ast=\bar{x}$ is unstable, with $\bar{x}=\frac{(2-\beta)c-[2-\alpha+(\alpha-\beta)p]u}{(\alpha-\beta)b}$.
			\item If $u\geq\frac{(2-\beta)c}{2-\alpha+(\alpha-\beta)p}$, there exist two equilibria, that is, $x^\ast=0$ and $x^\ast=1$, where $x^\ast=1$ is asymptotically stable and $x^\ast=0$ is unstable.
		\end{enumerate}
	\end{lemma}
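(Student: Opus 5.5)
Write the right-hand side of \eqref{eq7} as $F(x,u,t)=x(1-x)\Gamma(x)$, where $\Gamma(x)=\frac{\omega}{4}\big[(\alpha-\beta)b\,x-K+Mu\big]$ with $K=(2-\beta)c>0$ and $M=2-\alpha+(\alpha-\beta)p$. First I check that $M>0$: since $\alpha\le 1$ we have $2-\alpha\ge 1$, and $(\alpha-\beta)p\ge 0$ because $\alpha>\beta$ and $p\in[0,1]$. So the two thresholds in the statement are well defined. Moreover, $\Gamma$ is affine and strictly increasing in $x$, because $(\alpha-\beta)b>0$. The whole analysis therefore reduces to the signs of $\Gamma$ at the endpoints,
\[
\Gamma(0)=\tfrac{\omega}{4}(Mu-K),\qquad \Gamma(1)=\tfrac{\omega}{4}\big(Mu-K+(\alpha-\beta)b\big),
\]
together with the location of the unique zero $\bar{x}=\frac{K-Mu}{(\alpha-\beta)b}$ of $\Gamma$. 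Note that $\Gamma(1)-\Gamma(0)>0$. The three regimes of $u$ in the statement are exactly: $\Gamma(1)\le 0$; $\Gamma(0)<0<\Gamma(1)$, which is equivalent to $\bar x\in(0,1)$; and $\Gamma(0)\ge 0$.

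\emph{Equilibria.} Solving $F=0$ gives $x^\ast=0$, $x^\ast=1$, and any $x\in(0,1)$ with $\Gamma(x)=0$. Since $\Gamma$ is strictly monotone, an interior equilibrium exists if and only if $\bar x\in(0,1)$, that is, only in regime 2. In regimes 1 and 3 the zero satisfies $\bar x\ge 1$ or $\bar x\le 0$, so no interior equilibrium arises.

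\emph{Stability via linearization, as in Lemma \ref{lem1}.} Differentiating gives
\[
\frac{\partial F}{\partial x}=(1-2x)\Gamma(x)+x(1-x)\tfrac{\omega}{4}(\alpha-\beta)b.
\]
Evaluating at the equilibria:
\begin{itemize}
\item $\frac{\partial F}{\partial x}\big|_{0}=\Gamma(0)$ and $\frac{\partial F}{\partial x}\big|_{1}=-\Gamma(1)$.
\item $\frac{\partial F}{\partial x}\big|_{\bar x}=\bar x(1-\bar x)\frac{\omega}{4}(\alpha-\beta)b>0$, so $\bar x$ is unstable whenever it exists.
\end{itemize}
In the strict parts of the regimes this settles everything:
\begin{itemize}
\item Regime 1 with $\Gamma(1)<0$: then $\Gamma(0)<0$ as well, so $0$ is asymptotically stable and $1$ is unstable.
\item Regime 2: $\Gamma(0)<0<\Gamma(1)$, so both boundary points are asymptotically stable.
\item Regime 3 with $\Gamma(0)>0$: then $\Gamma(1)>0$, so $1$ is stable and $0$ is unstable.
\end{itemize}

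\emph{Main obstacle: the non-hyperbolic endpoints.} The hard step is the two boundary values of $u$, where linearization gives a zero eigenvalue.
\begin{itemize}
\item At $u=\frac{K-(\alpha-\beta)b}{M}$ we have $\Gamma(1)=0$. Then $F=-\frac{\omega}{4}(\alpha-\beta)b\,x(1-x)^2$, which is negative on $(0,1)$. Trajectories starting near $1$ therefore move away from it, so $x^\ast=1$ is unstable. Since $\Gamma(0)<0$, $x^\ast=0$ is still asymptotically stable.
\item At $u=K/M$ we have $\Gamma(0)=0$. Then $F=\frac{\omega}{4}(\alpha-\beta)b\,x^2(1-x)>0$ on $(0,1)$. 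So $x^\ast=0$ is unstable, and every $x_0\in(0,1)$ increases to $1$. Hence $x^\ast=1$ is asymptotically stable, which is also confirmed by $-\Gamma(1)<0$.
\end{itemize}
I would handle both degenerate cases by this direct sign (phase-line) argument on $(0,1)$, which is valid for any $x_0\in(0,1)$. A final remark is that regime 1 may be empty when $(\alpha-\beta)b\ge K$; in that case the statement holds vacuously.
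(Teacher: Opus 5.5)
Your proposal is correct and follows essentially the same route as the paper: write $F=x(1-x)\Gamma(x)$, locate the zero $\bar x$ of the affine factor, evaluate $\partial F/\partial x$ at $0$, $1$ and $\bar x$, and read off the signs in each regime. Your phase-line treatment of the two boundary values of $u$ genuinely improves on the paper, since there the linearization is degenerate ($\partial F/\partial x|_{x^\ast=1}=0$ at the right end of regime 1 and $\partial F/\partial x|_{x^\ast=0}=0$ at the left end of regime 3), whereas the paper asserts strict signs over the closed ranges; one minor slip is that regime 1 is empty only when $(\alpha-\beta)b>K$, and at equality it is the single point $u=0$, which your degenerate-case argument already covers.
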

	\noindent \begin{proof}
		Solving $F(x, u, t)=0$, we find that Eq.~\eqref{eq7} has two equilibria: $x^\ast=0$, and $x^\ast=1$, along with a possible interior equilibrium point $x^\ast=\bar{x}$. In addition, the partial derivative of $F(x, u, t)$  with respect to $x$  is given by 
		\begin{equation}\label{eq11}
			\begin{aligned}
				&\frac{\partial F}{\partial x}=\Gamma(1-2x)+ \frac{\omega(\alpha-\beta)b}{4}x(1-x)\\
				&=\omega\frac{(\alpha-\beta)bx-(2-\beta)c+[2-\alpha+(\alpha-\beta)p]u}{4}\\
				&\times(1-2x)+\frac{\omega(\alpha-\beta)b}{4}x(1-x).
			\end{aligned}
		\end{equation}
		In particular, 	at these equilibrium points, we have
		\begin{subequations}\label{eq12}
			\begin{align}
				&\frac{\partial F}{\partial x}\big|_{x^\ast=1}=-\omega\frac{(\alpha-\beta)b-(2-\beta)c+[2-\alpha+(\alpha-\beta)p]u}{4},\label{eq:12a}\\
				&\frac{\partial F}{\partial x}\big|_{x^\ast=0}=\omega\frac{[2-\alpha+(\alpha-\beta)p]u-(2-\beta)c}{4}, \label{eq:12b}\\
				&\frac{\partial F}{\partial x}\big|_{x^\ast=\bar{x}}=\frac{\omega(\alpha-\beta)b}{4}\bar{x}(1-\bar{x}). \label{eq:12c}
			\end{align}
		\end{subequations}

		For $0\leq u\leq\frac{(2-\beta)c-(\alpha-\beta)b}{2-\alpha+(\alpha-\beta)p}$, one checks $x^\ast=\bar{x}\geq1$,  implying that the interior equilibrium point does not exist. Thus, 
		Eq.~\eqref{eq7}  again has only two equilibria: $x^\ast=0$, and $x^\ast=1$. At these points, we further have that $\frac{\partial F}{\partial x}\big|_{x^\ast=1}>0$  and $\frac{\partial F}{\partial x}\big|_{x^\ast=0}<0$ for all $t\geq0$. Therefore,  $x^\ast=1$ is unstable, and  $x^\ast=0$ is asymptotically stable.
		Next, for $\frac{(2-\beta)c-(\alpha-\beta)b}{2-\alpha+(\alpha-\beta)p}<u<\frac{(2-\beta)c}{2-\alpha+(\alpha-\beta)p}$, 
		one gets $x^\ast=\bar{x}\in(0, 1)$, and accordingly, Eq.~\eqref{eq7} has three equilibria: $x^\ast=0$, $x^\ast=\bar{x}$, and $x^\ast=1$, which at these three equilibria, yields that $\frac{\partial F}{\partial x}\big|_{x^\ast=1}<0$, $\frac{\partial F}{\partial x}\big|_{x^\ast=\bar{x}}>0$,  and $\frac{\partial F}{\partial x}\big|_{x^\ast=0}<0$ for all $t\geq0$. Therefore,  $x^\ast=1$ is asymptotically stable,  $x^\ast=0$ is asymptotically stable, and $x^\ast=\bar{x}$ is unstable. 
		Last, for $u\geq\frac{(2-\beta)c}{2-\alpha+(\alpha-\beta)p}$,
		we have $x^\ast=\bar{x}\leq0$, indicating that the interior equilibrium point does not exist. As a result, Eq.~\eqref{eq7} has only two equilibria: $x^\ast=0$ and $x^\ast=1$. At these equilibria, one yields that $\frac{\partial F}{\partial x}\big|_{x^\ast=1}<0$ and $\frac{\partial F}{\partial x}\big|_{x^\ast=0}>0$  for all $t\geq0$, and thus,  $x^\ast=1$ is asymptotically stable and $x^\ast=0$ is unstable. 
	\end{proof}

	\begin{lemma}\label{lem3}
		For $\alpha<\beta$ and any  $x_0\in (0, 1)$, the following statements hold:
		\begin{enumerate}
			\item If $0\leq u\leq\frac{(2-\beta)c}{2-\alpha+(\alpha-\beta)p}$,  there exist two equilibria, that is, $x^\ast=0$ and $x^\ast=1$, where $x^\ast=0$ is asymptotically  stable and $x^\ast=1$ is unstable.
			\item If $\frac{(2-\beta)c}{2-\alpha+(\alpha-\beta)p}<u<
			\frac{(2-\beta)c-(\alpha-\beta)b}{2-\alpha+(\alpha-\beta)p}$, there exist three equilibria, that is, $x^\ast=0$, $x^\ast=1$, and $x^\ast=\hat{x}$, where $x^\ast=0$ and $x^\ast=1$ are unstable, while $x^\ast=\bar{x}$ is asymptotically stable.
			\item If $u\geq\frac{(2-\beta)c-(\alpha-\beta)b}{2-\alpha+(\alpha-\beta)p}$, there exist two equilibria, that is, $x^\ast=0$ and $x^\ast=1$, where  $x^\ast=1$ is asymptotically stable and $x^\ast=0$ is unstable.
		\end{enumerate}
	\end{lemma}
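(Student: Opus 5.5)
We mirror the proof of Lemma~\ref{lem2}, now with $\alpha-\beta<0$, which reverses both the ordering of the two thresholds and the stability of the interior equilibrium. We write $K=2-\alpha+(\alpha-\beta)p$ for the coefficient of $u$ in $\Gamma$. Since $p\in[0,1]$, $K$ is a convex combination of $2-\alpha$ and $2-\beta$, both at least $1$, so $K>0$. Because $(\alpha-\beta)b<0$, we also have
\[
u_1:=\frac{(2-\beta)c}{K}<u_2:=\frac{(2-\beta)c-(\alpha-\beta)b}{K},
\]
so the three $u$-ranges in the statement partition $[0,\infty)$.

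The first step is to solve $F(x,u,t)=0$. This gives $x^\ast=0$, $x^\ast=1$, and possibly the root of $\Gamma$,
\[
\bar x=\frac{(2-\beta)c-Ku}{(\alpha-\beta)b}=\frac{Ku-(2-\beta)c}{(\beta-\alpha)b}.
\]
This root is the point the statement calls $\hat x$. From this formula, $\bar x\le 0$ exactly when $u\le u_1$, $\bar x\in(0,1)$ exactly when $u_1<u<u_2$, and $\bar x\ge1$ exactly when $u\ge u_2$. Hence there are two equilibria in cases 1 and 3 and three in case 2.

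For stability we reuse the derivative \eqref{eq11} and its evaluations \eqref{eq12}, which hold for any $\alpha,\beta$. At $x^\ast=0$ the derivative is $\omega(Ku-(2-\beta)c)/4$, which is negative for $u<u_1$ and positive for $u>u_1$. At $x^\ast=1$ it is $-\omega(Ku-(2-\beta)c+(\alpha-\beta)b)/4$, which is positive for $u<u_2$ and negative for $u>u_2$. At $\bar x$ it equals $\frac{\omega(\alpha-\beta)b}{4}\bar x(1-\bar x)$, which is negative whenever $\bar x\in(0,1)$. Combining these signs gives the three claims: in case 1, $0$ is stable and $1$ unstable; in case 2, both boundary points are unstable and $\bar x$ is asymptotically stable; in case 3, $1$ is stable and $0$ unstable.

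The only delicate point is the boundary values $u=u_1$ and $u=u_2$, where linearization degenerates. At $u=u_1$ we have $\Gamma=(\alpha-\beta)bx<0$ for $x>0$, so $\dot x<0$ on $(0,1)$. This makes $0$ asymptotically stable and $1$ unstable, so the value belongs to case 1. At $u=u_2$ we have $\Gamma=(\alpha-\beta)b(x-1)>0$ on $(0,1)$, so $\dot x>0$. This makes $1$ stable and $0$ unstable, matching case 3. A sign argument on $\dot x$ over $(0,1)$ therefore covers these endpoints rigorously, and by the same reasoning confirms the local conclusions in every case.
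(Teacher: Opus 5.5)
Your proposal is correct and follows the paper's own proof. Like the paper, you locate $\bar x$ relative to $[0,1]$ through the two thresholds and read off stability from the signs of \eqref{eq12}. Your extra checks ($K>0$, $u_1<u_2$) and your direct sign argument on $\dot x$ at $u=u_1$ and $u=u_2$ actually tighten the paper's version: the paper asserts strict derivative signs at $x^\ast=0$ (for $u=u_1$) and at $x^\ast=1$ (for $u=u_2$), but there the linearization is degenerate, and stability holds only relative to the state space $[0,1]$.
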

	\noindent \begin{proof}
		Similar to Lemma \ref{lem2}, Eq.~\eqref{eq7} has two equilibria: $x^\ast=0$ and $x^\ast=1$, as well as one potentially existing interior equilibrium point $x^\ast=\bar{x}$. Furthermore, the partial derivative of $F(x, u, t)$  with respect to $x$ is defined in \eqref{eq11}.
		
		For  $0\leq u\leq\frac{(2-\beta)c}{2-\alpha+(\alpha-\beta)p}$, one checks $x^\ast=\bar{x}\leq0$, implying that \eqref{eq7} has only two equilibria: $x^\ast=0$ and $x^\ast=1$. At these two equilibria, we have 
		that $\frac{\partial F}{\partial x}\big|_{x^\ast=1}>0$  and $\frac{\partial F}{\partial x}\big|_{x^\ast=0}<0$ for $\forall \,\, t\geq0$. Therefore,  $x^\ast=1$ is unstable, and  $x^\ast=0$ is asymptotically stable.
		Next, for  $\frac{(2-\beta)c}{2-\alpha+(\alpha-\beta)p}<u<
		\frac{(2-\beta)c-(\alpha-\beta)b}{2-\alpha+(\alpha-\beta)p}$, one gets $x^\ast=\bar{x}\in(0, 1)$, meaning that \eqref{eq7} has three equilibria:  $x^\ast=0$, $x^\ast=\bar{x}$, and $x^\ast=1$. At these three equilibria, we find that 
		$\frac{\partial F}{\partial x}\big|_{x^\ast=1}>0$, $\frac{\partial F}{\partial x}\big|_{x^\ast=\bar{x}}<0$, and $\frac{\partial F}{\partial x}\big|_{x^\ast=0}>0$ for $\forall \,\, t\geq0$. Therefore,  both $x^\ast=0$ and $x^\ast=1$ are unstable, while $x^\ast=\bar{x}$ is asymptotically stable. 
		Finally, for $u\geq\frac{(2-\beta)c-(\alpha-\beta)b}{2-\alpha+(\alpha-\beta)p}$, we have $x^\ast=\bar{x}\geq1$, implying that the interior equilibrium point does not exist. Consequently, \eqref{eq7} has only two equilibria: $x^\ast=0$, and $x^\ast=1$. At these two equilibria, one yields that $\frac{\partial F}{\partial x}\big|_{x^\ast=1}<0$ and $\frac{\partial F}{\partial x}\big|_{x^\ast=0}>0$  for $\forall \,\, t\geq0$, and thus,  $x^\ast=1$ is asymptotically stable and $x^\ast=0$ is unstable. 
	\end{proof}	
	
	\begin{figure}[!t]
		\begin{center}
			\includegraphics[width=3.6in]{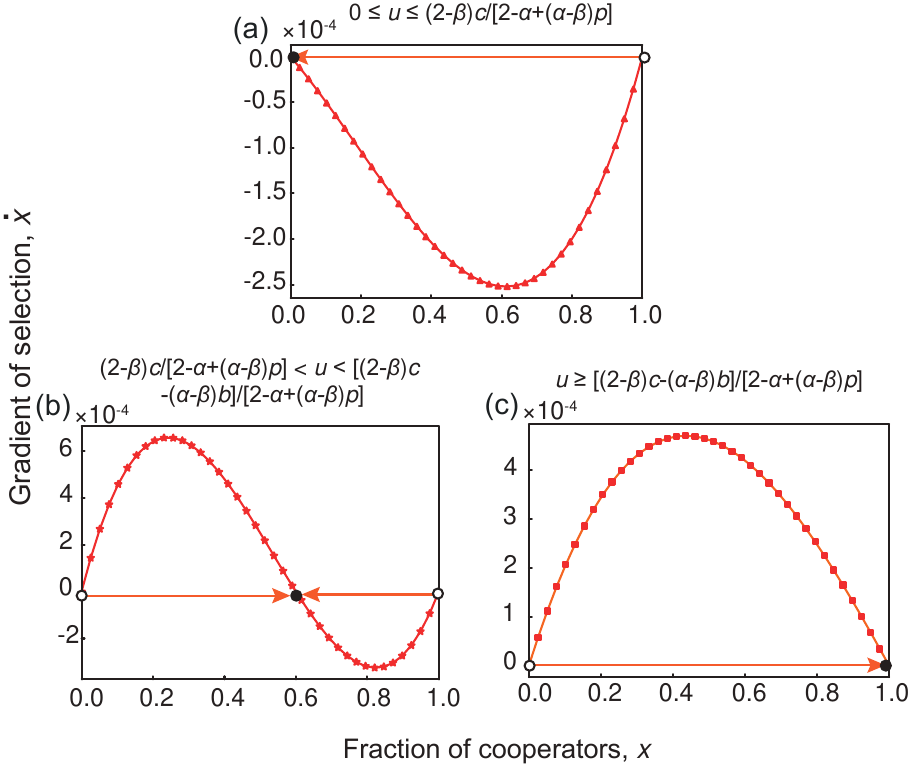}
			\caption{The gradient of selection in dependence on the proportion of cooperators in the whole population under the condition  $\alpha<\beta$, where panel ($a$) satisfies $0\leq u\leq\frac{(2-\beta)c}{2-\alpha+(\alpha-\beta)p}$, panel ($b$) satisfies $\frac{(2-\beta)c}{2-\alpha+(\alpha-\beta)p}<u<
				\frac{(2-\beta)c-(\alpha-\beta)b}{2-\alpha+(\alpha-\beta)p}$, and the another case is presented in panel ($c$). Solid (open) circles represent stable (unstable) equilibria. 
				The values of $u$ are  $u=0.8$ ($a$), $u=1.1$ ($b$), and  $u=1.6$ ($c$). 
				Parameters: $n=100$, $b=2$, $c=1$, $\alpha=0.5$, $p=0.5$, $x_{0}=0.5$, $\beta=0.7$, and $\omega=0.01$.}\label{fig4}
		\end{center}
	\end{figure}
	\begin{Remark}\label{Remark4}
		The conditions of  $0\leq u\leq\frac{(2-\beta)c-(\alpha-\beta)b}{2-\alpha+(\alpha-\beta)p}$ in lemma \ref{lem2} and $\frac{(2-\beta)c}{2-\alpha+(\alpha-\beta)p}<u<
		\frac{(2-\beta)c-(\alpha-\beta)b}{2-\alpha+(\alpha-\beta)p}$ in  lemma \ref{lem3}  imply that the inequality $(2-\beta)c>(\alpha-\beta)b$ holds, and we thus do not consider the case where $(2-\beta)c\leq(\alpha-\beta)b$ in this work. 
	\end{Remark}
	
From the above-mentioned Lemmas, when $\alpha=\beta$  (Lemma \ref{lem1}), the governing system equation with the combined incentive admits two equilibrium points, namely, $x^\ast=0$ and $x^\ast=1$. In this case, cooperation becomes more abundant than defection if and only if $u>c$. For the case of $\alpha>\beta$ (Lemma \ref{lem2}), the system equation exhibits three equilibrium points: 
	$x^\ast=0$, $x^\ast=1$, and $x^\ast=\bar{x}$. Here, 
	cooperation dominates defection if and only if $u\geq\frac{(2-\beta)c}{2-\alpha+(\alpha-\beta)p}$.  Similarly,  when  $\alpha<\beta$ (Lemma \ref{lem3}), the system equation again has three equilibrium points mentioned above, and  
	cooperation prevails if and only if  $u\geq\frac{(2-\beta)c-(\alpha-\beta)b}{2-\alpha+(\alpha-\beta)p}$. However, implementing the combined incentive incurs a cost, making it desirable to optimize an incentive-based control policy that ensures the establishment of cooperation at a lower cost.  This aspect will be explored in the next subsection.

	\subsection{Optimal incentive protocol for cooperation}
	Using the results presented in the previous subsection, we study the consequence of time-dependent combined incentive. We show  the optimally combined incentive protocol by solving the optimal control problem defined in \eqref{eq11}. Then, we analyze the amount of cumulative cost induced by the optimally combined incentive for the dynamical system to reach the expected terminal state from the initial state. The related results are presented in Theorem \ref{thm1}.
	
	\begin{theorem}\label{thm1}
		The optimally combined incentive protocol for the optimal control problem \eqref{eq11} is
		\begin{equation}\label{eq13}
			\begin{aligned}
				u^{\ast}=\frac{2[(2-\beta)c-(\alpha-\beta)bx]}{2-\alpha+(\alpha-\beta)p}.
			\end{aligned}
		\end{equation}
		With the optimally combined incentive protocol, the solution of the system~\eqref{eq10} is
		\begin{equation}\label{eq14}
			\begin{aligned}
				\dot{x}(t)=\omega\frac{(2-\beta)c-(\alpha-\beta)bx}{4}x(1-x),
			\end{aligned}
		\end{equation}
		and its cumulative cost is
		\begin{equation}\label{eq15}
			\begin{aligned}
				J^\ast=\phi[(2-\beta)c\mathcal{A}-(\alpha-\beta)b\mathcal{B}],
			\end{aligned}
		\end{equation}
		where
		$\phi=\frac{8n^2(n-1)^2}{\omega [2-\alpha+(\alpha-\beta)p]^2}>0$, $\mathcal{A}=(2p-1)^2(x_{0}-1+\delta)+p^2\ln\big(\frac{1-x_0}{\delta}\big)+(1-p)^2\ln\big(\frac{1-\delta}{x_0}\big)>0$, and
		$\mathcal{B}=(1-2p)(1-x_{0}-\delta)-\frac{(2p-1)^2}{2}(1-x_{0}-\delta)(1+x_{0}-\delta)+p^2\ln\big(\frac{1-x_0}{\delta}\big)$. 
	\end{theorem}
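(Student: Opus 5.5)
The plan is to solve \eqref{eq8} with the Hamilton--Jacobi--Bellman equation, using Assumption~\ref{assumption3} (free terminal time, so $\partial J^\ast/\partial t=0$) together with the weak-selection dynamics \eqref{eq7}. To keep the computation readable, I will abbreviate
\[
N=n^2(n-1)^2,\quad q(x)=px+(1-p)(1-x),\quad K=2-\alpha+(\alpha-\beta)p,\quad M(x)=(2-\beta)c-(\alpha-\beta)bx .
\]
With this notation $G=n(n-1)u\,q(x)$ and $F=\frac{\omega}{4}x(1-x)[Ku-M(x)]$. The stationary HJB equation then reads
\[
0=\min_u\Big\{\tfrac12 Nq^2u^2+J^\ast_x\,\tfrac{\omega}{4}x(1-x)[Ku-M]\Big\}.
\]

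\textbf{Step 1 (first-order condition).} The bracket is strictly convex in $u$ since $q^2>0$. Setting its $u$-derivative to zero gives
\[
J^\ast_x\,\tfrac{\omega}{4}x(1-x)K=-Nq^2u^\ast .
\]

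\textbf{Step 2 (close the HJB equation).} I substitute Step 1 back into the HJB equation to eliminate $J^\ast_x$. The equation becomes $-\tfrac12 Nq^2(u^\ast)^2+Nq^2u^\ast M/K=0$. This has roots $u^\ast=0$ and $u^\ast=2M/K$, the latter being \eqref{eq13}. I discard $u^\ast=0$: by Lemmas~\ref{lem1}--\ref{lem3}, with zero incentive $x^\ast=0$ is asymptotically stable and $\dot x<0$, so the target $x(t_f)=1-\delta>x_0$ of Assumption~\ref{assumption4} would be unreachable.

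\textbf{Step 3 (closed-loop dynamics).} Plugging $u^\ast$ into \eqref{eq7} gives $Ku^\ast-M=M$, which is exactly \eqref{eq14}. By Remark~\ref{Remark4}, $(2-\beta)c>(\alpha-\beta)b$. Since $M$ is affine in $x$ with $M(0)=(2-\beta)c>0$ and $M(1)=(2-\beta)c-(\alpha-\beta)b>0$, we have $M>0$ on $[0,1]$. Hence $\dot x>0$ on $(0,1)$, and the trajectory moves monotonically from $x_0$ to $1-\delta$ in finite time. I expect this step to be the main delicate point: the root selection in Step 2 and the change of variables below both depend on it.

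\textbf{Step 4 (evaluate the cost).} Because $\dot x>0$, I change variables from $t$ to $x$ via $\mathrm{d}t=\mathrm{d}x/\dot x$:
\[
J^\ast=\int_{x_0}^{1-\delta}\frac{\tfrac12 Nq^2\,4M^2/K^2}{\tfrac{\omega}{4}Mx(1-x)}\,\mathrm{d}x
=\phi\int_{x_0}^{1-\delta}\frac{q^2\,[(2-\beta)c-(\alpha-\beta)bx]}{x(1-x)}\,\mathrm{d}x ,
\]
with $\phi=8N/(\omega K^2)$. It remains to compute two elementary integrals by partial fractions:
\[
\frac{q^2}{x(1-x)}=-(2p-1)^2+\frac{(1-p)^2}{x}+\frac{p^2}{1-x},\qquad
\frac{xq^2}{x(1-x)}=\frac{p^2}{1-x}-2p(2p-1)+(2p-1)^2(1-x).
\]
Integrating the first over $[x_0,1-\delta]$ yields $\mathcal{A}$. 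Integrating the second yields $\mathcal{B}$ after writing $\int_{x_0}^{1-\delta}(1-x)\,\mathrm{d}x=-\tfrac12(1-x_0-\delta)(1+x_0-\delta)+(1-x_0-\delta)$ and collecting the linear terms into $(1-2p)(1-x_0-\delta)$. This gives \eqref{eq15}.

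\textbf{Positivity.} The positivity of $\mathcal{A}$ follows because it equals the integral of the positive function $q^2/(x(1-x))$ over $[x_0,1-\delta]$, which is nonempty since $1-\delta>x_0$. The positivity of $\phi$ is immediate, as it requires only $K\neq 0$.

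\textbf{Sufficiency.} Finally, I would verify optimality by a standard HJB verification argument. The value function $J^\ast(x)$ defined by the integral is $C^1$ on $(0,1)$. It satisfies the stationary HJB equation with the minimizing $u^\ast$, and the Hamiltonian is convex in $u$.
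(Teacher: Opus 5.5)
Your proposal is correct and follows essentially the same route as the paper. Both use the stationary HJB equation and the first-order condition in $u$, then substitute back to obtain two roots: the paper phrases them as $\partial J^\ast/\partial x=0$ or its nonzero expression, you as $u^\ast=0$ or $u^\ast=2M/K$. Both then derive the closed-loop dynamics and use the change of variables $t\to x$ with partial fractions to obtain $\mathcal{A}$ and $\mathcal{B}$.

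Your rejection of the zero root by reachability ($\dot x<0$ everywhere when $u=0$) and your HJB verification step are more careful than the paper, which rejects that root by a sign argument and never addresses sufficiency.
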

	\begin{proof} 
		The Hamiltonian function $H(x, u, t)$ of \eqref{eq8} is defined as
		\begin{equation}\label{eq16}
			\begin{aligned}
				&H(x, u, t)=\frac{1}{2}\|G(x, u, t)\|_2^2+\frac{\partial J^{\ast}}{\partial x}F(x, u,  t)\\
				&=\frac{1}{2}[G(x, u, t)]^2+\frac{\partial J^{\ast}}{\partial x}F(x, u,  t)\\
				&=\frac{1}{2}n^2(n-1)^2u^2 [px+(1-p)(1-x)]^2\\
				&+\frac{\partial J^{\ast}}{\partial x}\frac{\omega x(1-x)}{4}\big\{(\alpha-\beta)bx-(2-\beta)c\\
				&+[2-\alpha+(\alpha-\beta)p]u\big\},
			\end{aligned}
		\end{equation}
		where $J^\ast$ is the optimal function of $x$ and $t$ for the optimally combined  incentive $u^{\ast}$, given by
		\begin{equation}\label{eq17}
			\begin{aligned}
				J^\ast=\frac{1}{2}\int^{t_{f}}_{t_0}\|G(x, u^{\ast}, t)\|_2^2\textrm{d}t=\frac{1}{2}\int^{t_{f}}_{t_0}[G(x, u^{\ast}, t)]^2\textrm{d}t.
			\end{aligned}
		\end{equation}
		Solving $\frac{\partial H}{\partial u}=0$, one can check that the optimally combined  incentive protocol  $u^{\ast}$ satisfies
		\begin{equation}\label{eq18}
			\begin{aligned}
				u^{\ast}=-\frac{\omega\frac{2-\alpha+(\alpha-\beta)p}{4}x(1-x)}{n^2(n-1)^2[px+(1-p)(1-x)]^2} \frac{\partial J^{\ast}}{\partial x}.
			\end{aligned}
		\end{equation}
		The standard approach is to solve the canonical equations of \eqref{eq16} to obtain the optimal incentive protocol.  However, since the system \eqref{eq8} is nonlinear, it is intractable to obtain the optimal incentive protocol by a direct calculation.  
		To this end, we employ  the approach of HJB equation to solve the optimal control problem \eqref{eq10}, and this equation can be written as
		\begin{equation}\label{eq19}
			\begin{aligned}
				-\frac{\partial J^\ast}{\partial t}&=H(x, u^{\ast}, t)=\frac{1}{2}[G(x, u^*, t)]^2+\frac{\partial J^{\ast}}{\partial x}F(x, u^*,  t).
			\end{aligned}
		\end{equation}
		By substituting \eqref{eq18} into the HJB equation, one gets that
		\begin{equation}\label{eq20}
			\begin{aligned}
				-\frac{\partial J^{\ast}}{\partial t}&=\frac{1}{2}\big\{n(n-1)u^* [px+(1-p)(1-x)] \big\}^2\\
				&+\frac{\partial J^{\ast}}{\partial x}\frac{\omega x(1-x)}{4}\big\{(\alpha-\beta)bx-(2-\beta)c\\
				&+[2-\alpha+(\alpha-\beta)p]u^*\big\}.
			\end{aligned}
		\end{equation}
		Under Assumption \ref{assumption1}, one knows that the optimal function $J^{\ast}$ is independent of $t$, and then \eqref{eq20} turns out to be 
		\begin{equation}\label{eq21}
			\begin{aligned}
				\frac{\partial J^\ast}{\partial t}=0.
			\end{aligned}
		\end{equation}
		Furthermore, it is derived that 
		$\frac{\partial J^{\ast}}{\partial x}=0$ 
		or
		\begin{equation}\label{eq22}
			\begin{aligned}
				\frac{\partial J^{\ast}}{\partial x}&=\frac{8n^2(n-1)^2[px+(1-p)(1-x)]^2}{\omega [2-\alpha+(\alpha-\beta)p]^2x(1-x)}\\
				&\times[(\alpha-\beta)bx-(2-\beta)c].
			\end{aligned}
		\end{equation}
		According to Remark \ref{Remark4},  we have $(2-\beta)c>(\alpha-\beta)b$, implying  that $(\alpha-\beta)bx<(2-\beta)c$ for all $x\in(0, 1)$. Thus it holds that $\frac{\partial J^{\ast}}{\partial x}<0$ in Eq. \eqref{eq22}. In view of the fact that  $u>0$, $\frac{\partial J^{\ast}}{\partial x}$ is always negative (i.e., $\frac{\partial J^{\ast}}{\partial x}<0$). Thus, \eqref{eq22}  can be satisfied, and by substituting it into \eqref{eq20}, one gets
		the optimally combined incentive protocol $u^{\ast}$ as
		\begin{equation}\label{eq23}
			\begin{aligned}
				u^{\ast}=\frac{2[(2-\beta)c-(\alpha-\beta)bx]}{2-\alpha+(\alpha-\beta)p}. 
			\end{aligned}
		\end{equation}
		Furthermore, the system equation \eqref{eq7} can be rewritten as
		\begin{equation}\label{eq24}
			\begin{aligned}
				\dot{x}(t)=\omega\frac{(2-\beta)c-(\alpha-\beta)bx}{4}x(1-x).
			\end{aligned}
		\end{equation}
		By checking \eqref{eq24}, one can see that the system needs infinitely long time to reach a full cooperation state from any initial  state $x_0\in (0, 1)$.  Instead, under 
		Assumption \ref{assumption3}, the terminal state $x(t_{f})$ is assumed to be $1-\delta$, where $\delta$ is the parameter that determines the  proportion of cooperators in the population at the terminal time
		$t_{f}$. Since $x(t)$ increases monotonically over time $t$,  $x(t_{f})> x_{0}$ (i.e., $x_0+\delta<1$). 
		Combining \eqref{eq23} and \eqref{eq24},  the amount of cumulative  incentive required by $u^{\ast}$ for the system to reach the expected terminal state $x(t_{f})$ from the initial state $x_0$ is
		\begin{equation}\label{eq25}
			\begin{aligned}
				&J^\ast=\frac{1}{2}\int^{t_{f}}_{t_0}\|G(x, u^{\ast}, t)\|_2^2\textrm{d}t\\
				&=\frac{1}{2}\int^{t_{f}}_{t_0}\big\{n(n-1)u^* [px+(1-p)(1-x)] \big\}^2\textrm{d}t\\
				&=\frac{1}{2}\int^{x(t_{f})}_{x(t_0)}\frac{n^2(n-1)^2(u^*)^2 [px+(1-p)(1-x)]^2}{\omega\frac{(2-\beta)c-(\alpha-\beta)bx}{4}x(1-x)}\textrm{d}x\\
				&=\frac{8n^2(n-1)^2}{\omega [2-\alpha+(\alpha-\beta)p]^2}\int^{x(t_{f})}_{x(t_0)}\frac{1}{x(1-x)}\\
				&\times[(2-\beta)c-(\alpha-\beta)bx] [px+(1-p)(1-x)]^2\textrm{d}x\\
				&=\frac{8n^2(n-1)^2}{\omega [2-\alpha+(\alpha-\beta)p]^2} \Big\{(2-\beta)c\int^{1-\delta}_{x_0} \Big[p^2\frac{x}{1-x}\\
				&+2p(1-p)+(1-p)^2\frac{1-x}{x}\Big] \textrm{d}x-(\alpha-\beta)b  \\
				&\int^{1-\delta}_{x_0}\Big[p^2\frac{x^2}{1-x}+2p(1-p)x+(1-p)^2(1-x)\Big]\textrm{d}x\Big\}\\
				&=\frac{\phi [(2-\beta)c\mathcal{A}-(\alpha-\beta)b\mathcal{B}]}{[2-\alpha+(\alpha-\beta)p]^2},
			\end{aligned}
		\end{equation}
		where $\phi=\frac{8n^2(n-1)^2}{\omega}$, $\mathcal{A}=\int^{1-\delta}_{x_0} \big[p^2\frac{x}{1-x}+2p(1-p)+(1-p)^2\frac{1-x}{x}\big] \textrm{d}x=(2p-1)^2(x_{0}-1+\delta)+p^2\ln\big(\frac{1-x_0}{\delta}\big)+(1-p)^2\ln\big(\frac{1-\delta}{x_0}\big)$, and $\mathcal{B}=\int^{1-\delta}_{x_0}\big[p^2\frac{x^2}{1-x}+2p(1-p)x+(1-p)^2(1-x)\big]\textrm{d}x=(1-2p)(1-x_{0}-\delta)-\frac{(2p-1)^2}{2}(1-x_{0}-\delta)(1+x_{0}-\delta)+p^2\ln\big(\frac{1-x_0}{\delta}\big)$.
	\end{proof}
	
	\begin{figure}[!t]
		\begin{center}
			\includegraphics[width=3.7in]{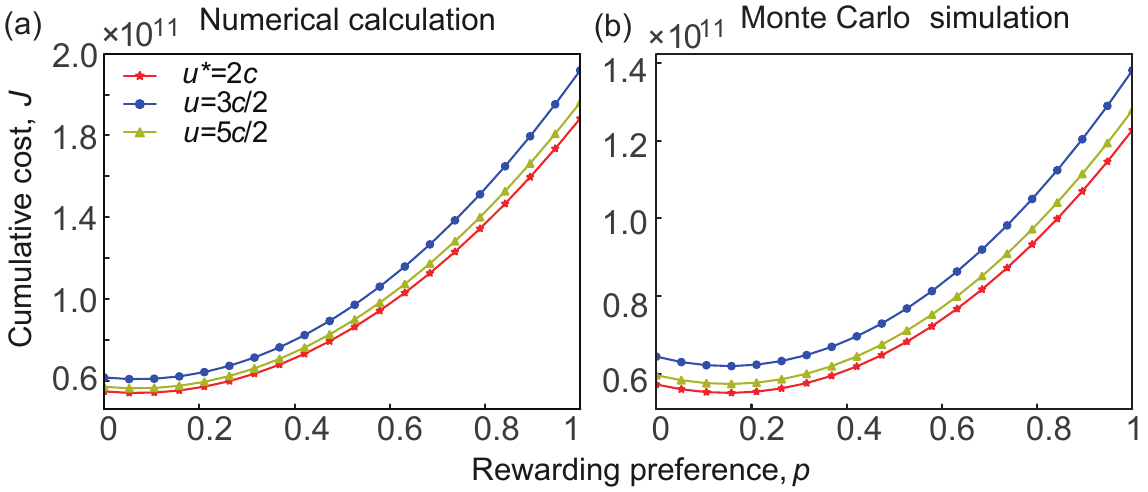}
			\caption{The required amount of cumulative cost to reach the expected proportion of cooperators in the population for the optimally combined incentive in dependence of the rewarding perference $p$ for different incentive protocols, including $u^*$,  $u=\frac{3c}{2}$, and $u=\frac{5c}{2}$. Besides, panel $(a)$  shows the results derived from numerical calculations based on \eqref{eq25}, and panel $(b)$ presents the results obtained from Monte carlo simulations by averaging over $200$ independent simulation runs.
				Parameters: $n=100$, $x_0=0.15$, $\delta=0.01$, $\alpha=\beta=0.5$, $\omega=0.01$, $b=2$, and $c=1$.}\label{fig5}
		\end{center}
	\end{figure}

		\begin{figure}[!t]
		\begin{center}
			\includegraphics[width=3.5in]{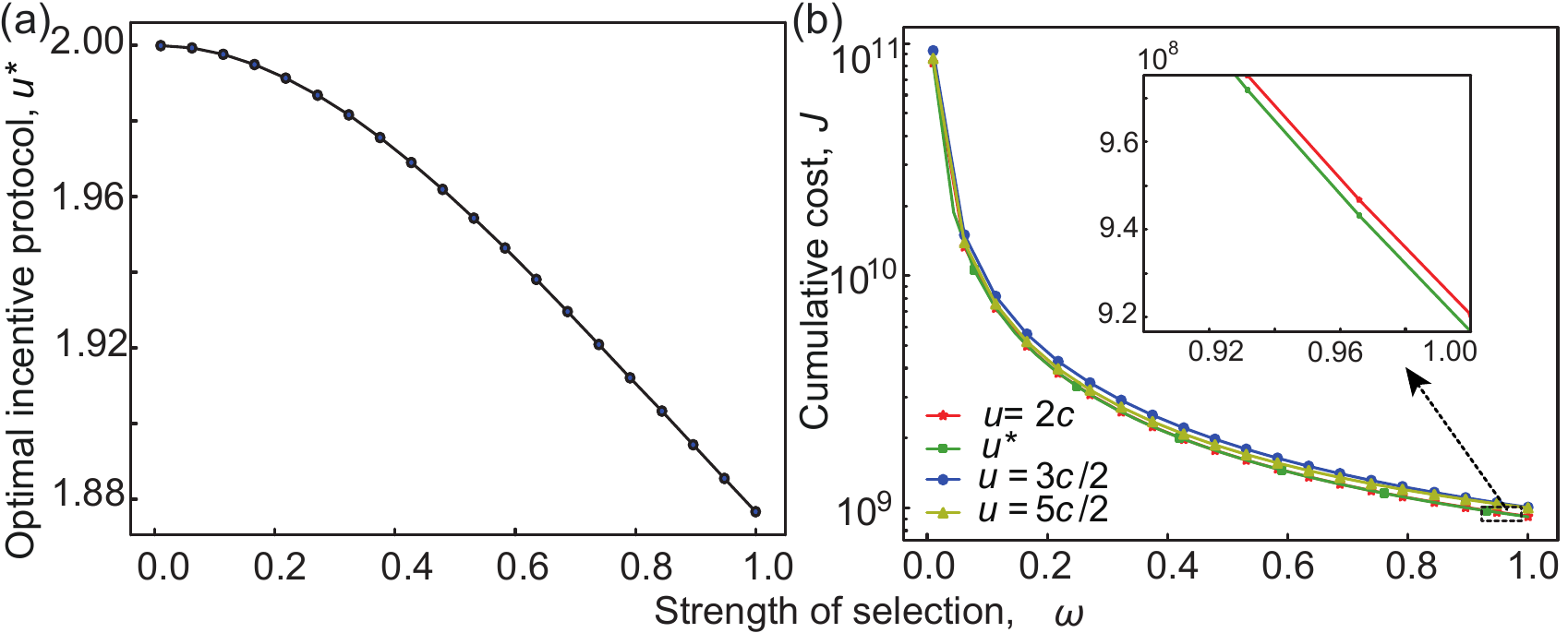}
			\caption{The optimal incentive prorocol $u^*$ and the cumulative cost $J$ in dependence of the strength of selection $\omega$.	Here, we also consider four different incentive protocols in panel $(b)$, including $u^*$, $u=2c$, $u=\frac{3c}{2}$, and $u=\frac{5c}{2}$. The values of $\alpha$ and $\beta$ are $0.5$ (i.e., $\alpha=\beta=0.5$). Parameters: $n=100$, $x_0=0.15$, $\delta=0.01$,  $p=0.5$, $b=2$, and $c=1$.}\label{fig6}
		\end{center}
	\end{figure}

	Furthermore, let $\alpha=\beta=0$, and  we accordingly
	give an remark to elaborate on the special case without error.
	\begin{Remark}\label{Remark5}
		When institutional errors are not considered (i.e., $\alpha=\beta=0$), the obtained optimally combined incentive protocols become $u^{\ast}=2c$, and the amounts of cumulative costs are given by $J^\ast|_{\alpha=0, \beta=0}=\frac{c\phi \mathcal{A}}{2}=\frac{4n^2(n-1)^2 c}{\omega }  [(2p-1)^2(x_{0}-1+\delta)+p^2\ln\big(\frac{1-x_0}{\delta}\big)+(1-p)^2\ln\big(\frac{1-\delta}{x_0}\big)]$.
	\end{Remark}

Combining Theorem~\ref{thm1} and Remark~\ref{Remark4},   one knows that all obtained optimal incentive protocols are $u^{\ast}=\frac{2[(2-\beta)c-(\alpha-\beta)bx]}{2-\alpha+(\alpha-\beta)p}$, and these incentive protocols can make the dynamical system converge to the full cooperation state (i.e., $x=1$) for all $p\in[0, 1]$. However, it can be observed that the requested cumulative  cost is determined by the rewarding preference $p$,  the error probabilities (i.e., $\alpha$ and $\beta$), the initial cooperation level $x_0$, and the difference $\delta$ between the full cooperation state (i.e., $x=1$) and the desired cooperation state (i.e., $x=1-\delta$).  To this end, we will provide comparative analysis regarding the cumulative costs in presence of payoff-observation errors
	with those in error-free cases, and further to determine whether the presence of  error significantly impacts incentive costs compared to the cases without error. This comparison is presented in the next subsection.

	\begin{figure}[!t]
		\begin{center}
			\includegraphics[width=3.6in]{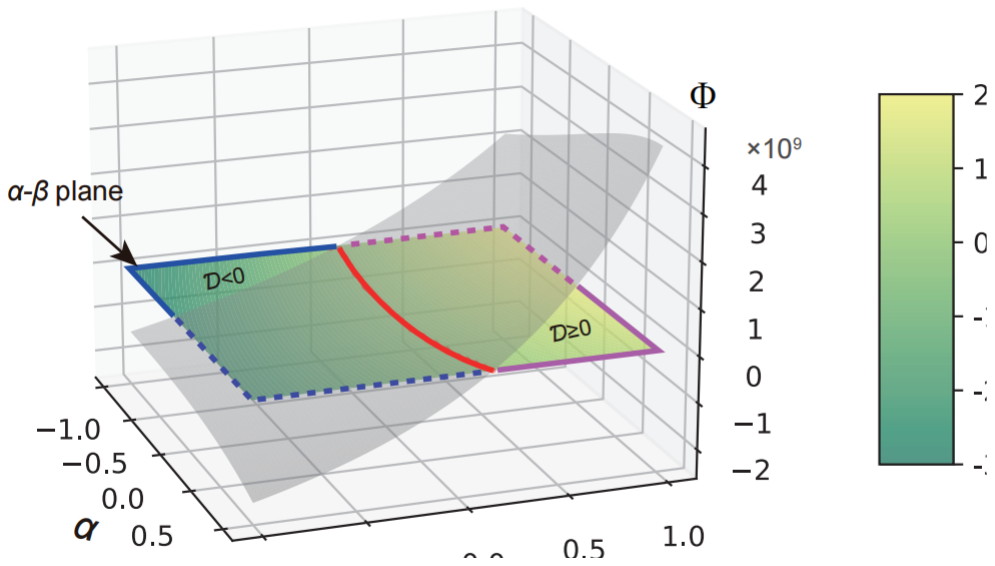}
			\caption{The optimal cost difference $\Psi$ between the cases with and without payoff-observation errors.  The colored surface illustrates the contour plane of the function  ${D}$ with respect to $\alpha\in[-1,1]$ and $\beta\in[-1,1]$ on the $\alpha-\beta$ plane. This contour plane is divided by two regions: the blue-red line region represents the case of ${D}<0$, with $(\alpha, \beta)\in \mathcal{L}_1$ and $\mathcal{L}_1=\{(\alpha, \beta)|\mathcal{D}<0, \alpha\in[-1,1],  \textrm{and} \,\,\, \beta\in[-1,1]\}$.   The pink-red line region  corresponds to  the other case of ${D}\geq0$, with $(\alpha, \beta)\in\mathcal{L}_2$ and $\mathcal{L}_2=\{(\alpha, \beta)|\mathcal{D}\geq 0, \alpha\in[-1,1], \textrm{and} \,\,\, \beta\in[-1,1]\}$. Besides, the gray shadowed surface presents the function values of $\Psi$ for all $\alpha\in[-1,1]$ and $\beta\in[-1,1]$. Parameters: $n=100$, $p=0.5$, $x_0=0.1$, $\delta=0.1$, $\omega=0.1$, $b=2$, and $c=1$.}\label{fig7}
		\end{center}
	\end{figure}

	\subsection{The impact of errors on the cumulative cost induced by the optimal incentives}
	
	In this section, we now proceed to discussing how the error probabilities (i.e., $\alpha$ and $\beta$) affect the cumulative cost of $J^*$ induced by the optimally combined incentives. For notational simplicity, we define the cumulative cost difference between the optimal incentive protocols in a gaming environment with payoff-observation error and the one without such error as 
	the function $\Psi$, which from Remark \ref{Remark4} and Theorem \ref{thm1} is given by 
	\begin{equation}
		\begin{aligned}
			&\Psi=J^\ast-J^\ast|_{\alpha=0, \beta=0}\\
			&=\frac{\phi [(2-\beta)c\mathcal{A}-(\alpha-\beta)b\mathcal{B}]}{[2-\alpha+(\alpha-\beta)p]^2}-\frac{\phi c\mathcal{A}}{2}\\
			&=\phi \frac{2 [(2-\beta)c\mathcal{A}-(\alpha-\beta)b\mathcal{B}]-c\mathcal{A} [2-\alpha+(\alpha-\beta)p]^2 }{2[2-\alpha+(\alpha-\beta)p]^2}\\
			&=c\mathcal{A}\phi \frac{\mathcal{D}}{2[2-\alpha+(\alpha-\beta)p]^2},
		\end{aligned}
	\end{equation}
	where $\mathcal{D}=2(2-\beta)-[2-\alpha+(\alpha-\beta)p]^2 -2(\alpha-\beta)\frac{b\mathcal{B}}{c\mathcal{A}}$. %For the convenience of analysis, we define $\mathcal{L}_1=\{(\alpha, \beta)|\mathcal{D}<0, \alpha\in[-1,1],  \textrm{and} \,\,\, \beta\in[-1,1]\}$, and $\mathcal{L}_2=\{(\alpha, \beta)|\mathcal{D}\geq 0, \alpha\in[-1,1], \textrm{and} \,\,\, \beta\in[-1,1]\}$. 

	\subsubsection{Comparison analysis of cumulative costs between scenarios with and without payoff-observation errors}
	We aim to study the relationship between the cumulative cost in presence of payoff-observation errors and the one in error-free cases in the following theorem. 
	\begin{theorem}\label{thm2}
		The following statements hold:
		\begin{enumerate}
			\item If  $\mathcal{D}<0$ (i.e., $2(2-\beta)-[2-\alpha+(\alpha-\beta)p]^2 <2(\alpha-\beta)\frac{b\mathcal{B}}{c\mathcal{A}}$), then $\Psi<0$ (i.e., $J^*<J^\ast|_{\alpha=0, \beta=0}$).
			\item If $\mathcal{D}\geq 0$ (i.e., $2(2-\beta)-[2-\alpha+(\alpha-\beta)p]^2 \geq 2(\alpha-\beta)\frac{b\mathcal{B}}{c\mathcal{A}}$), then $\Psi\geq 0$  (i.e., $J^*\geq J^\ast|_{\alpha=0, \beta=0}$).
		\end{enumerate}
	\end{theorem}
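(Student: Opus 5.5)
The plan is to read the sign of $\Psi$ off the factorization derived just before the statement,
\[
\Psi = c\mathcal{A}\phi\,\frac{\mathcal{D}}{2[2-\alpha+(\alpha-\beta)p]^2},
\]
so that both claims follow once the prefactor $c\mathcal{A}\phi/\bigl(2[2-\alpha+(\alpha-\beta)p]^2\bigr)$ is shown to be strictly positive. Then $\operatorname{sgn}\Psi=\operatorname{sgn}\mathcal{D}$, which gives statement 1 ($\mathcal{D}<0\Rightarrow\Psi<0$) and statement 2 ($\mathcal{D}\ge0\Rightarrow\Psi\ge0$). The parenthetical reformulations of $\mathcal{D}<0$ and $\mathcal{D}\ge 0$ are just the definition of $\mathcal{D}$ rearranged. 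This requires $c\mathcal{A}\neq 0$, which is also needed for $\mathcal{D}$ to be well defined, and is covered by the positivity of $\mathcal{A}$ below.

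The positivity of the factors will be checked one at a time.

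\emph{The factor $c$.} We have $c>0$ by the standing assumption $b>c>0$.

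\emph{The factor $\phi$.} In the form used in $\Psi$, $\phi=8n^2(n-1)^2/\omega$. This is positive for $n\ge2$ and $\omega>0$; the weak-selection analysis assumes $\omega$ small but nonzero.

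\emph{The denominator.} It suffices that $2-\alpha+(\alpha-\beta)p\neq0$. This quantity equals $(1-p)(2-\alpha)+p(2-\beta)$, a convex combination of $2-\alpha$ and $2-\beta$. Both lie in $[1,3]$ because $\alpha,\beta\in[-1,1]$, so it is at least $1$.

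\emph{The factor $\mathcal{A}$.} This is the step needing real argument, since Theorem~\ref{thm1} only asserts $\mathcal{A}>0$ without proof. I would not work with the closed form. Instead I would return to the integral representation from the proof of Theorem~\ref{thm1},
\[
\mathcal{A}=\int_{x_0}^{1-\delta}\frac{[px+(1-p)(1-x)]^2}{x(1-x)}\,\textrm{d}x .
\]
The integrand is nonnegative on $(0,1)$. Its numerator vanishes only at the single point where $px+(1-p)(1-x)=0$, and this is a set of measure zero. Hence the integrand is positive almost everywhere. By Assumption~\ref{assumption4}, $x_0<1-\delta$, so the interval of integration is nondegenerate and $\mathcal{A}>0$.

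Combining the four facts, the prefactor is strictly positive. I expect no serious obstacle beyond confirming $\mathcal{A}>0$ via this integral form and noting the nonvanishing of the denominator, since the rest is a direct sign argument.
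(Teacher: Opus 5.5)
Your proposal is correct and follows the same route as the paper: the sign of $\Psi$ equals the sign of $\mathcal{D}$ because the prefactor $c\mathcal{A}\phi/(2[2-\alpha+(\alpha-\beta)p]^2)$ is strictly positive. The paper only asserts that $c$, $\phi$, $\mathcal{A}$ and the squared denominator are positive, whereas you justify this via the convex-combination bound $2-\alpha+(\alpha-\beta)p\geq 1$ and via $\mathcal{A}>0$ from its integral form (where in fact $px+(1-p)(1-x)>0$ on all of $(0,1)$, so no measure-zero argument is even needed), which fills in details the paper leaves implicit.
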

	\begin{proof} 
		Since $\mathcal{A}$, $c$, $\phi$, and $[2-\alpha+(\alpha-\beta)p]^2$ are positive (i.e., $\mathcal{A}>0$, $c>0$, $\phi>0$, and $[2-\alpha+(\alpha-\beta)p]^2>0$), one can check that $\Psi<0$ if $\mathcal{D}<0$.
		While $\Psi\geq 0$ if $\mathcal{D} \geq 0$, that is, $\Psi\geq 0$.
	\end{proof}
	
	One interpretation of Theorem \ref{thm2} is  that under the condition of $\mathcal{D}<0$, the cost optimal difference $\Psi$ is always negative (i.e., $\Psi<0$ and $J^*<J^\ast|_{\alpha=0, \beta=0}$), implying that 
	applying the optimal incentive in the presence of payoff-observation errors always leads to a lower cumulative cost, when compared to the scenario without such errors. Otherwise,  $\Psi$ is always postive (i.e., $\Psi\geq0$ and $J^*\geq J^\ast|_{\alpha=0, \beta=0}$), suggesting that applying the optimal incentive in the absence of error induces a lower cost.
	
	\begin{algorithm}[htb]
		\caption{Projected Gradient Descent with Normalization}
		\label{algorithm1}
		\begin{algorithmic}[1] %这个1 表示每一行都显示数字
			\REQUIRE ~~\\ %算法的输入参数：Input
			Learning rate $\eta$, 
			maximum iterations $N$, and objective function given by Eq.~(27).
			\STATE Randomly initialize: \\    
			$\alpha\gets randClosed (-1, 1)$, $\beta \gets randClosed (-1, 1)$
			$p \gets randClosed (0, 1)$, 
			$x_0 \gets randOpen (0, 1)$, and
			$\delta \gets randOpen (0, 1-x_0)$,\\
			where $randClosed$ denotes the random selection of a value from the specified closed interval, while $randOpen$ refers to a randomly chosen value from the specified open interval. Repeat step 1 until $\mathcal{D}<0$ holds.
			\STATE \textbf{For} $i = 1$ to $N$:
			\begin{enumerate}
				\item Compute gradient normalization $\nabla_{L}$:
				\[
				\nabla \gets \text{ComputeGradients}(\Psi, \boldsymbol{s}) \,\,\textrm{and}\,\, \nabla_{L} = \frac{\nabla}{\left\| \nabla \right\|_2},
				\]
				where \texttt{ComputeGradients} calculates the partial derivatives of $\Psi$ with respect to $\boldsymbol{s}\in\bar{S}$. Gradient normalization plays a crucial role in balancing the partial derivatives of $\Psi$, thereby mitigating the risk of the algorithm exceeding the computable numerical range~\cite{Chen2018ICML}.
				\item Update parameter:
				\[
				\boldsymbol{s} \gets \boldsymbol{s} - \eta \nabla_{L}.
				\]
				\item Project parameters to valid ranges:
				\[\boldsymbol{s}\gets\arg min_{\boldsymbol{z}\in \bar{S}} \left\| \boldsymbol{s}-\boldsymbol{z} \right\|^2_{2}, 
				\]
				which is Euclidean projection onto $\bar{S}$ to ensure that the constraint of $\bar{S}$ is satisfied. 
			\end{enumerate}
			\STATE Compute final value:
			\[
			\Psi^* \gets \Psi(\boldsymbol{s}).
			\]
			
			\ENSURE ~~\\ %算法的输出：Output
			Optimal parameters $\boldsymbol{s^*}=(\alpha^*, \beta^*, p^*, x_0^*, \delta^*)$ and the minimum value $\Psi^*$. 
		\end{algorithmic}
	\end{algorithm}

	\subsubsection{Optimization of the cost difference between the cases with and without payoff-observation errors}  From Theorem \ref{thm2}, it can be seen that when $\mathcal{D}<0$, the cost difference $\Psi$ between the scenario with errors and the scenarios without error is positive, meaning that the cumulative cost with error is lower than the one without error. Therefore, under this condition, we further aim to explore when this cost difference is minimal. To this end, we formulate an optimization problem, which is described by 
	\begin{equation}
		\begin{aligned}\label{eq27}
			&\min\,\, \Psi\\
			& s.t. \,\, \left\{\begin{array}{lc}
				\mathcal{D}<0,\\
				0\leq p\leq 1,  \\
				0<x_0<1,\\
				0<\delta<1-x_0, \\
				-1\leq\alpha, \beta \leq 1.
			\end{array}\right.
		\end{aligned}
	\end{equation}
	To facilitate the design of the algorithm, here we define the constraints in the optimization problem as a set of all parameters, denoted by 
	\begin{equation}\label{eq28}
		\begin{aligned}
			\bar{S}=\{&\boldsymbol{s}=(\alpha, \beta, p, x_0, \delta)| \mathcal{D}<0, p\in(0, 1), x_0\in(0, 1), \\
			&\delta\in(0,1-x_0), \textrm{and} \,\,\alpha, \beta \in[0,1]\}\subseteq \mathbb{R}^5.
		\end{aligned}
	\end{equation}
	Normally, we need to solve the optimization problem mentioned above to obtain the exact expression of optimal parameters $\boldsymbol{s^*}=(\alpha^*, \beta^*, p^*, x_0^*, \delta^*)$,  such that the cost difference $\Psi$ is minimized. However,  since both  $\Psi$ and $\mathcal{D}$ are nonlinear functions, it is difficult to derive the exact expression of the optimal protocols through theoretical analysis. Instead, we turn to numerical methods to solve this optimization problem and design Algorithm \ref{algorithm1}. After numerical calculation, we derive the numerical values of these parameters as
	$p^*=1.0$, $x_0^*=1 \times 10^{-6}$, $\delta^*=1 \times 10^{-6}$, $\alpha^*= 0.999999\approx1$, and $\beta^*=-0.999999\approx-1$. From this, we can also 
	observe that when $\alpha^*$ approaches $1$ and $\beta^*$ approaches $-1$, $\Psi$ is minimal, this implies that the error parameter of $C$-agent observing a deviation in the payoff of $D$-agent is $1$, leading to that the observed payoff of $D$-agent is $\hat{\Pi}_{D} = (1-\alpha) \Pi_{D}=0$, and thus $C$-agent will continue to maintain its strategy. While $D$-agent observes a deviation of $0$ in the payoff of $C$-agent, resulting in that the observed payoff of $C$-agent is $\hat{\Pi}_{C} = (1-\beta) \Pi_{C} = 2 \Pi_{C}$, and thus $D$-agent will change it strategy since the payoff of opponent is relatively higher. Therefore, 
	the optimal cost in presence of error is significantly lower than the one in error-free scenarios. Therefore, our numerical results are consistent with our intuitive understanding.
	
	%p = 1.0, x = 1e-06, delta = 1e-06, alpha = 0.9999999999999062, beta = -0.9999999999999062
	
	%最小值: -59664408665.93683

	\section{Numerical Results}\label{sec4}
	In this section, we present numerical calculations to verify the analytical results obtained in Lemmas \ref{lem1}-\ref{lem3} and Theorems \ref{thm1}-\ref{thm2} presented in the previous section. The corresponding results are illustrated in Figs.~\ref{fig1}-\ref{fig7}. 
	
	\subsection{Numerical results for the case of gradient of selection}
	We consider three different cases, which are $\alpha=\beta$ (Lemmas \ref{lem1}), $\alpha>\beta$ (Lemmas \ref{lem2}), and $\alpha<\beta$ (Lemmas \ref{lem3}). For each cases, we show the gradient of selection $\dot{x}$ as a function of the proportion of cooperators $x$, and the numerical results are depicted in Figs.~\ref{fig2}-\ref{fig4}, respectively. 
	
	\subsubsection{$\alpha=\beta$} 
	As shown in Fig.~\ref{fig2}, the system exhibits two distinct states depending on the value of 
	$u$.  When  $0\leq u<c$, the gradient of selection remains negative for all the initial cooperation levels $x_0\in(0, 1)$, making the system to converge to a full defection state. Conversely, when $u>c$,  the gradient of selection  is always positive, driving the system towards a full cooperation state. Thus, 
	we verify the theoretical results obtained 
	in Lemma \ref{lem1}.
	
	\subsubsection{$\alpha>\beta$}  
	Fig.~\ref{fig3} reveals a more complex dynamic. When $0\leq u\leq\frac{(2-\beta)c-(\alpha-\beta)b}{2-\alpha+(\alpha-\beta)p}$, the gradient of selection remains negative over the interval $x_0\in(0, 1)$, making the system to converge to a full defection state.  However, for $\frac{(2-\beta)c-(\alpha-\beta)b}{2-\alpha+(\alpha-\beta)p}<u<\frac{(2-\beta)c}{2-\alpha+(\alpha-\beta)p}$,  the intermediate equilibrium point exists. This equilibrium point is unstable
	and acts as a threshold separating two basins of attraction, that is,  if the initial cooperation level is below this threshold,  then the system converges to the  full defection state; otherwise, it reaches the full cooperation state. Notably, both absorbing states are stable, indicating that PDG can indeed be transformed into a coordination game. When $u\geq\frac{(2-\beta)c}{2-\alpha+(\alpha-\beta)p}$ 
	, the intermediate equilibrium point disappears and the gradient of selection remains positive for all $x_0$, ensuring the convergence to a full cooperation state.  Therefore, we verify the theoretical results obtained in Lemma \ref{lem2}.

	\subsubsection{$\alpha<\beta$} 
	Fig.~\ref{fig4} illustrates that when $0\leq u\leq\frac{(2-\beta)c}{2-\alpha+(\alpha-\beta)p}$, 
	the gradient of selection is entirely negative, making the system to the full defection state for any $x_0\in(0, 1)$. In contrast, when $\frac{(2-\beta)c}{2-\alpha+(\alpha-\beta)p}<u<
	\frac{(2-\beta)c-(\alpha-\beta)b}{2-\alpha+(\alpha-\beta)p}$, the 
	intermediate equilibrium point is stable, while the boundary equilibria are unstable. Consequently, the system evolves to a coexistence state for any initial cooperation level $x_0\in(0, 1)$. When $u\geq\frac{(2-\beta)c-(\alpha-\beta)b}{2-\alpha+(\alpha-\beta)p}$, the intermediate equilibrium point disappears and the gradient of selection remains positive for all the initial cooperation level $x_0\in (0, 1)$, leading the system to converge  towards the full cooperation state. Therefore, the above numerical results support the analytical results on complete graphs as stated in 
	Lemma \ref{lem3}.
	
	\subsection{Numerical results for the case of optimal incentive protocol}
	
	In order to verify Theorem \ref{thm1}, we provide some  numerical calculations and Monte Carlo simulation results in Fig.~\ref{fig5}. Here, we also consider that the $\delta$ value is sufficiently small, so that the full cooperation state $x(t_f)$
	could be reached.            
	In Fig.~\ref{fig5}, we show the cumulative cost value  as a function of the rewarding preference $p$ for the
	the optimally combined protocol $u^*$, and two other given incentive protocols $u=\frac{3c}{2}$ and $u=\frac{5c}{2}$. We find that the amount of cumulative cost of the optimally combined protocol is lowest when it is compared with the other two cost values for all the rewarding perference $p$ values.	Furthermore, from panels $(a)$ and $(b)$ of Fig.~\ref{fig5}, one can see that our simulation results coincide with numerical calculation results, both of which support our theoretical results as stated in Theorem \ref{thm1}, and confirm that the obtained protocol $u^*$ is the optimal incentive scheme and dominates other incentive protocols in the cost of implementing incentives.
	
Besides, we further extend our results obtained under
 weak selection ($\omega\rightarrow 0$) to the non-weak selection case ($\omega>0$), as shown in Fig.~\ref{fig6}.  Panels $(a)$ and $(b)$ illustrate the optimal incentive protocol $u^*$ and the cumulative cost $J$ as a function of the strength of selection $\omega$, respectively. We find that both $u^*$ and $J$ decrease monotonically with respect to $\omega$, indicating that moderate or strong selection leads to a lower implementation cost. 
 Moreover, from Fig.~\ref{fig6} (b), although $J$ decreases monotonically with $\omega$, the optimal protocol $u^*$ consistently outperforms the alternative protocols in terms of the cost of implementing incentives.
 Notably, from panel $(b)$ and its the inset subfigure, we see that the cumulative costs induced by the optimal incentive protocol $u^*$ and the weak-selection benchmark optimal protocol $u=2c$ are close,  but the former remains consistently lower than the latter.

	\subsection{Numerical results for the case of minimizing the payoff difference}
	Finally, we provide numerical calculations to validate the theoretical results of the optimal incentives obtained in Theorem \ref{thm2}, and illustrate the optimal cost difference $\Psi$ between the cases with and without payoff-observation errors as a function of the error parameter
	set of $(\alpha, \beta)$, as shown in Fig.~\ref{fig7}.  We find that the cost difference  $\Psi$ are consistently negative in the blue-red line region of ${D}<0$ with $(\alpha, \beta)\in \mathcal{L}_1$, while it is positive in the pink-red line region where ${D}\geq0$ with $(\alpha, \beta)\in \mathcal{L}_2$. These findings highlight the distinct impact of payoff-observation errors depending on whether the error parameter set $(\alpha, \beta)$ within the region of $\mathcal{L}_1$ or $\mathcal{L}_2$.

\section{Conclusion and Discussion}\label{sec5}
	
In this work, we investigate the optimal incentive control schemes in evolutionary games with payoff-observation errors, and also study the impact of this error on the implementation cost of incentives. We begin by deriving the dynamical equations governing the fraction of cooperators using the mean-field theory. Through a detailed analysis of the system equation, there are at most three equilibria: the full defection state, the coexistence state of cooperators and defectors, and the full cooperation state. By further analyzing the stability of these equilibria, we identify the minimal incentive level required to promote the evolution of cooperation in a well-mixed population. We then construct an index function to quantify the total execution cost, and accordingly obtain the optimal incentive protocols inducing to the minimal cumulative cost for the emergence of cooperation. Furthermore, our theoretical and numerical results show that the cumulative costs of optimal incentive protocols in the presence of payoff-observation errors is lower than those in error-free scenarios, and also determine the theoretical conditions for these results. Finally, we formulate an optimization problem to investigate the conditions under which the optimal cost in the presence of errors can be minimized relative to the error-free scenario, and design an algorithm for its the numerical solution. 
Beyond its theoretical implications, our findings may also provide practical insights for real-world systems in which decision-making is affected by perceptual inaccuracies~\cite{Spitzer2025MS}. Potential domains of application include public policy design, market competition, and collective decision-making in social and organizational networks, where both incentive mechanisms and information distortions play pivotal roles~\cite{Ferguson2022cdc, Mann2017PNAS}.

In this work, payoff-observation errors $\alpha$ and $\beta$ are considered as  non-strategic biases, which captures individual perception distortions or cognitive limitations rather than deliberate misreporting. Under this informational structure, these errors arise automatically from the environment, and agents neither engage in an explicit reporting process nor possess the ability to strategically manipulate observed information.
 Consequently, the optimal incentive protocol can be derived without imposing additional constraints such as incentive compatibility or information verifiability. This simplified setting enables a clear examination of the relationship between evolutionary stability and implementation cost, while also defining the scope of the present framework. In more general environments, however, agents may be able to strategically adjust or selectively disclose publicly observed payoff signals. In such cases, the structure and cost of the optimal incentive scheme may change, and the resulting solution is likely to be second-best rather than first-best. Extending the model to incorporate endogenous reporting mechanisms, strategic signal manipulation, and incentive-compatibility constraints therefore constitutes a meaningful direction for future research.

Notably,  our study focuses exclusively on a common type of individuals errors, payoff-observation errors, while in reality, individuals may also exhibit other forms of behavioral inaccuracies.
For example, when  participating in a game, agents may unintentionally adopt a strategy opposite to the one they originally intended to execute, which is commonly referred to as the \emph{strategy-implementation error}~\cite{ChenInterface2015}. Therefore, an interesting direction for future research is to consider the other types of individual errors, and systematically investigate how these errors influence the cost-efficiency of incentives. 
Furthermore, the present study assumes a well-mixed population, which does not fully capture 
the complex structural characteristics typically observed in real-world social systems. In practice, social interactions often take place on diverse network structures, including weighted networks~\cite{WeiHCYB26, ZhuYHCYB26}, heterogeneous networks~\cite{LiuAYHCYB26}, or various topologically diverse networks (e.g., scale-free networks \cite{BarabSCi1999} and small-world networks \cite{WattsNat1998}). Exploring the optimal incentive protocls with minimal cost for  cooperation under payoff-observation errors in structured populations would therefore be a valuable extension of the current work. In addition, microscopic strategy update rules play a crucial role in shaping the evolutionary dynamics of cooperation. While this work incorporates payoff-observation errors into the pairwise-comparison updating process (i.e., the Fermi rule), several other well-established update rules exist, such as aspiration-based updating~\cite{WangTNNLS2025}, as well as death-birth updating and birth-death updating~\cite{Ohtsuk2006Nature}. Extending the analysis to these alternative update rules would provide a more comprehensive understanding of how payoff-observation errors interact with different behavioral update processes and how such interactions influence the design and cost of incentives for promoting cooperation. Finally, the  game we considered is the PDG, whereas many other game paradigms, such as  coordination games and anti-coordination games~\cite{ZhuYHCYB26} also play important roles in modeling social interactions between agents. Exploring optimal incentive design under observation errors across different game structures would further broaden the applicability and generalizability of the proposed framework.

\section*{appendix A}\label{AppendixA}
In this section, we provide a detailed theoretical analysis
to derive the governing dynamical equation defined in Eq. \eqref{eq6}.

As mentioned in the previous section, all $n$ individuals in a well-mixed population opt for pairwise-comparison update rule to revise its strategy. The resulting evolutionary dynamics can be described exactly by a two-dimensional
birth-death process with reflecting states\cite{VanKampen1992}.  That is, at each time step the number $k$ of cooperators increases or decreases by one or remains unchanged. This corresponds to a discrete time Markov
chain with transitional probabilities
\begin{equation}\label{A.1}\tag{A.1}
	\begin{aligned}
		T^{+}(k)=\frac{k}{n}\frac{n-k}{n}\widetilde{W}_{D\rightarrow C}=\frac{k}{n}\frac{n-k}{n}\frac{1}{1+e^{-\omega[\hat{\Pi}_{C}-\Pi_{D}]}},
	\end{aligned}
\end{equation}
\begin{equation}\label{A.2}\tag{A.2}
	\begin{aligned}
		T^{-}(k)=\frac{k}{n}\frac{n-k}{n}\widetilde{W}_{C\rightarrow D}=\frac{k}{n}\frac{n-k}{n}\frac{1}{1+e^{-\omega[\hat{\Pi}_{D}-\Pi_{C}]}},
	\end{aligned}
\end{equation}
and
\begin{equation}\label{A.3}\tag{A.3}
	\begin{aligned}
		T^{0}(k)=1-T^{+}(k)-T^{-}(k),
	\end{aligned}
\end{equation}
where $T^{+}(k)$ denotes  the transition probability to change from state $k$ to $k+1$, $T^{-}(k)$ presents  the transition probability to change from state $k$ to $k-1$, and $T^{0}(k)$ refers
to the probability that the state $k$ remains unchanged.

Denoting the probability that the system is in state $k$ (i.e., having $k$ cooperators in the system) at time $\tau$ as $P^{\tau}(k)$, one can write the master equation \cite{VanKampen1992} as 
\begin{equation}\label{A.4}\tag{A.4}
	\begin{aligned}
		&P^{\tau+1}(k)-P^{\tau}(k)=P^{\tau}(k-1)T^{+}(k-1)\\
		&+P^{\tau}(k+1)T^{-}(k+1)-P^{\tau}(k)T^{-}(k)\\
		&-P^{\tau}(k)T^{+}(k).
	\end{aligned}
\end{equation}
Now, we introduce the notations $x=\frac{k}{n}$,  $t=\frac{\tau}{n}$, and the probability density $\rho(x, t)=n^2P^{\tau}(k)$. For the transition probabilities, we replace $T^{\pm}(k)\to T^{\pm}(x)$. This yields 
\begin{equation}\label{A.5}\tag{A.5}
	\begin{aligned}
		&\rho(x, t+\frac{1}{n})-\rho(x, t)=\rho(x-\frac{1}{n}, t)T^{+}(x-\frac{1}{n})\\
		&+\rho(x+\frac{1}{n}, t)T^{-}(x+\frac{1}{n})-\rho(x, t)T^{-}(x)\\
		&-\rho(x, t)T^{+}(x).
	\end{aligned}
\end{equation}
For $n\gg1$, the probability densities and the transition probabilities are expanded in a Taylor series at $x$ and $t$. More specific, we have  
\begin{equation}\label{A.6}\tag{A.6}
	\begin{aligned}
		\rho(x, t+\frac{1}{n})\approx\rho(x, t)+\frac{\partial \rho(x, t)}{\partial x}\frac{1}{n},
	\end{aligned}
\end{equation}
\begin{equation}\label{A.7}\tag{A.7}
	\begin{aligned}
		\rho(x\pm\frac{1}{n}, t)\approx\rho(x, t)\pm\frac{\partial \rho(x, t)}{\partial x}\frac{1}{n}+\frac{\partial^2 \rho(x, t)}{\partial x^2}\frac{1}{2n^2},
	\end{aligned}
\end{equation}
and 
\begin{equation}\label{A.8}\tag{A.8}
	\begin{aligned}
		T^{\pm}(x\pm\frac{1}{n})\approx T^{\pm}(x)\pm\frac{\partial T^{\pm}(x)}{\partial x}\frac{1}{n}+\frac{\partial^2 T^{\pm}(x)}{\partial x^2}\frac{1}{2n^2}.
	\end{aligned}
\end{equation}
Let us now look at the terms depending on their order in $\frac{1}{n}$. The terms independent of $\frac{1}{n}$ cancel on both sides of \eqref{A.5}. The first non-vanishing term is of order $\frac{1}{n}$. On the left hand side, we have the term $\frac{\partial \rho(x, t)}{\partial t}$ and on the right hand side, we have
\begin{equation}\label{A.9}\tag{A.9}
	\begin{aligned}
		&-\rho(x, t)\frac{\partial T^{+}(x)}{\partial x}+\rho(x, t)\frac{\partial T^{-}(x)}{\partial x}-T^{+}(x)\frac{\partial \rho(x, t)}{\partial x}\\
		&+T^{-}(x)\frac{\partial \rho(x, t)}{\partial x}=
		\frac{\partial [T^{+}(x)-T^{-}(x)]\rho(x, t)}{\partial x}.
	\end{aligned}
\end{equation}
This term describes the average motion of the system. In physics, it is called the drift term but in biology, it is referred to as selection term. Then, we consider the terms of the order $\frac{1}{n^2}$. On the right hand side, we have
\begin{equation}\label{A.10}\tag{A.10}
	\begin{aligned}
		&\frac{\partial \rho(x, t)}{\partial x} \frac{\partial T^{+}(x)}{\partial x}+\frac{\rho(x, t)}{2}\frac{\partial^2 T^{+}(x)}{\partial x^2}+\frac{T^{+}(x)}{2}\frac{\partial^2 \rho(x, t)}{\partial x^2}\\
		&+\frac{\partial \rho(x, t)}{\partial x}\frac{\partial T^{-}(x)}{\partial x}+\frac{\rho(x, t)}{2}\frac{\partial^2 T^{-}(x)}{\partial x^2}+\frac{T^{-}(x)}{2}\frac{\partial^2 \rho(x, t)}{\partial x^2}\\
		&=\frac{1}{2}
		\frac{\partial^2 [T^{+}(x)-T^{-}(x)]\rho(x, t)}{\partial x^2}.
	\end{aligned}
\end{equation}
This second term, called diffusion in physics, leads to a widening of the probability distribution in the course of time. In biology, it is called genetic or neutral drift, which can be a source of confusion. In the following, higher order terms will be neglected. Thus, we can approximate \eqref{A.5} by
\begin{equation}\label{A.10}\tag{A.10}
	\begin{aligned}
		&\frac{\partial \rho(x, t)}{\partial t}=-\frac{\partial [a(x) \rho(x, t)]}{\partial x}+\frac{1}{2}\frac{\partial^2 [b^2(x)\rho(x, t)]}{\partial x^2},
	\end{aligned}
\end{equation}
where $a(x)=T^{+}(x)-T^{-}(x)$ and $b(x)=\sqrt{\frac{T^{+}(x)-T^{-}(x)}{n}}$. This is the Fokker-Planck equation of the system, describing the deterministic time evolution of
a probability distribution. Equivalently, one can describe the process by a stochastic differential equation that generates a single trajectory. If the noise is microscopically uncorrelated, as in our case, the It$\hat{o}$ calculus has to be applied. In this framework,
the Fokker-Planck equation above corresponds to the Langevin equations 
\begin{equation}\label{A.11}\tag{A.11}
	\begin{aligned}
		\frac{\textrm{d}x(t)}{\textrm{d}t}=a(x)+b(x)\xi,
	\end{aligned}
\end{equation}
where $\xi$ is uncorrelated Gaussian white noise. As $n\rightarrow \infty$, the stochastic term vanishes that allows us to write the infinite approximation as 
\begin{equation}\label{A.12}\tag{A.12}
	\begin{aligned}
		&\frac{\textrm{d}x(t)}{\textrm{d}t}=\lim_{n\rightarrow \infty}a(x)+b(x)\xi=T^{+}(x)-T^{-}(x)\\
		&=x(1-x)\frac{1}{1+e^{-\omega[\hat{\Pi}_{C}-\Pi_{D}]}}-x(1-x)\frac{1}{1+e^{-\omega[\hat{\Pi}_{D}-\Pi_{C}]}}\Big]\\
		&=x(1-x)\Big[\frac{1}{1+e^{-\omega[\hat{\Pi}_{C}-\Pi_{D}]}}-\frac{1}{1+e^{-\omega[\hat{\Pi}_{D}-\Pi_{C}]}}\Big]\\
		&=x(1-x)\Big[\frac{1}{1+e^{-\omega[(1- \beta)\Pi_{C}-\Pi_{D}]}}-\frac{1}{1+e^{-\omega[(1-\alpha)\Pi_{D}-\Pi_{C}]}}\Big],
	\end{aligned}
\end{equation}
where  $\Pi_{C}=(b-c+pu_{R})x-(c-pu_{R})(1-x)$ and $\Pi_{D}=\big[b-(1-p)u_{P}\big]x-(1-p)u_{P}(1-x)$ from Eqs. \eqref{eq2} and \eqref{eq3} when $n\gg1$.
This is how we obtain the deterministic equation \eqref{eq6} in the main text.

\begin{IEEEbiography}[{\includegraphics[width=1in,height=1.25in,clip,keepaspectratio]{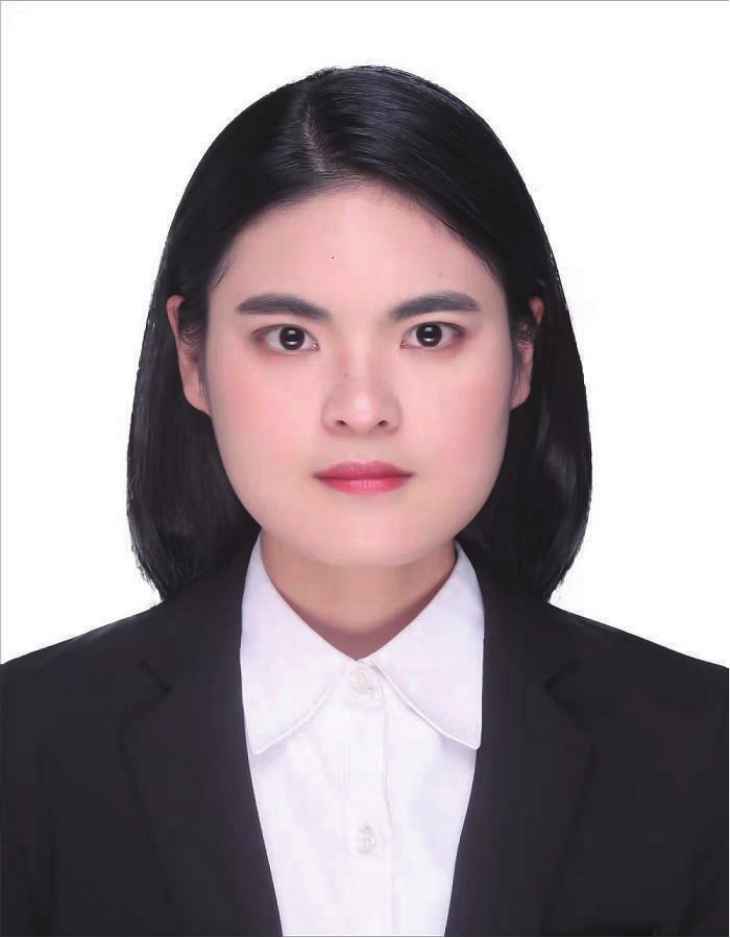}}]{Shengxian Wang} received the Ph.D. degree in mathematics from University of Electronic Science and Technology of China, Chengdu, China, in 2023. From December 2020 to December 2022, Dr. Wang was Guest Ph.D. in University of Groningen, Groningen, The Netherlands. She currently works with Anhui Normal University, Wuhu, China. Her research interests include evolutionary game dynamics, decision-making in game interactions, game-theoretical control, and collective intelligence. 
\end{IEEEbiography}

\begin{IEEEbiography}[{\includegraphics[width=1in,height=1.25in,clip,keepaspectratio]{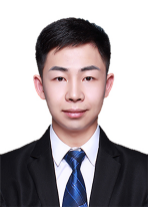}}]{Chengyu Yin} is currently works with Anhui Normal University, Wuhu, China. He received his master from the college of Computer Science and Technology at the Nanjing University of Aeronautics and Astronautics (NUAA) in April 2021; and his bachelor from the college of Computer Science at the Chongqing University (CQU) in July 2018. From October 2019 to June 2020, he worked as a research intern at the JINGDONG AI Research, Nanjing, China. His research interests include spatio-temporal data mining, recommendation system, collective intelligent. 
\end{IEEEbiography}

\begin{IEEEbiography}[{\includegraphics[width=1in,height=1.25in,clip,keepaspectratio]{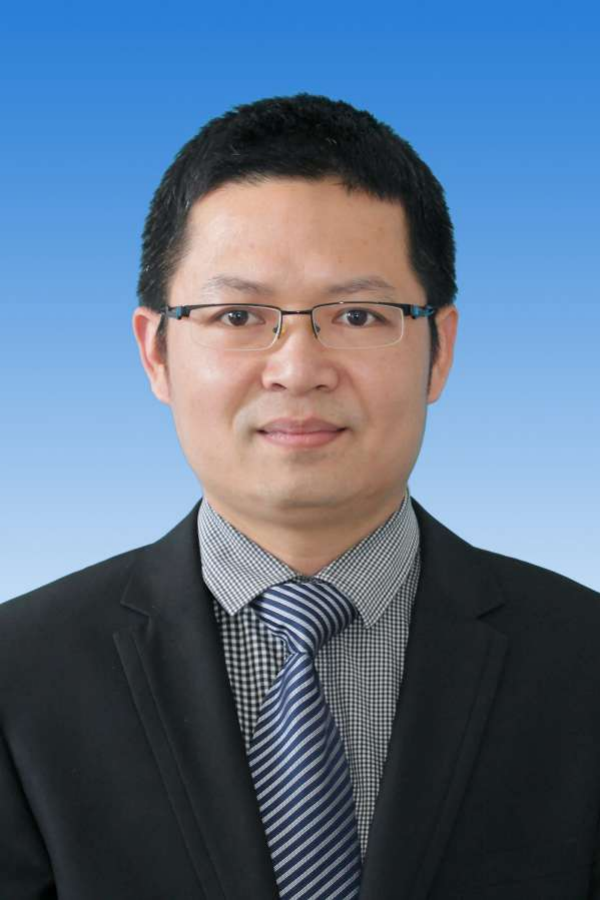}}]{Xiaojie Chen} is currently a professor at School of Mathematical Sciences in University of Electronic Science and Technology of China, China. He received the Bachelor degree in 2005 from National University of Defense Technology, China, and the Ph.D. degree in 2011 from Peking University, China. From September 2008 to September 2009, he was a visiting scholar in University of British Columbia, Canada. From February 2011 to January 2013, he was a postdoctoral research scholar at the International Institute for Applied Systems Analysis (IIASA), Austria. From February 2013 to January 2014, he was a research scholar at IIASA, Austria. He severs as the editorial board member for several international journals. His main research interests include evolutionary game dynamics, decision-making in game interactions, game-theoretical control, and collective intelligence. He has published over $100$ journal papers.
\end{IEEEbiography}

\begin{IEEEbiography}[{\includegraphics[width=1in,height=1.25in,clip,keepaspectratio]{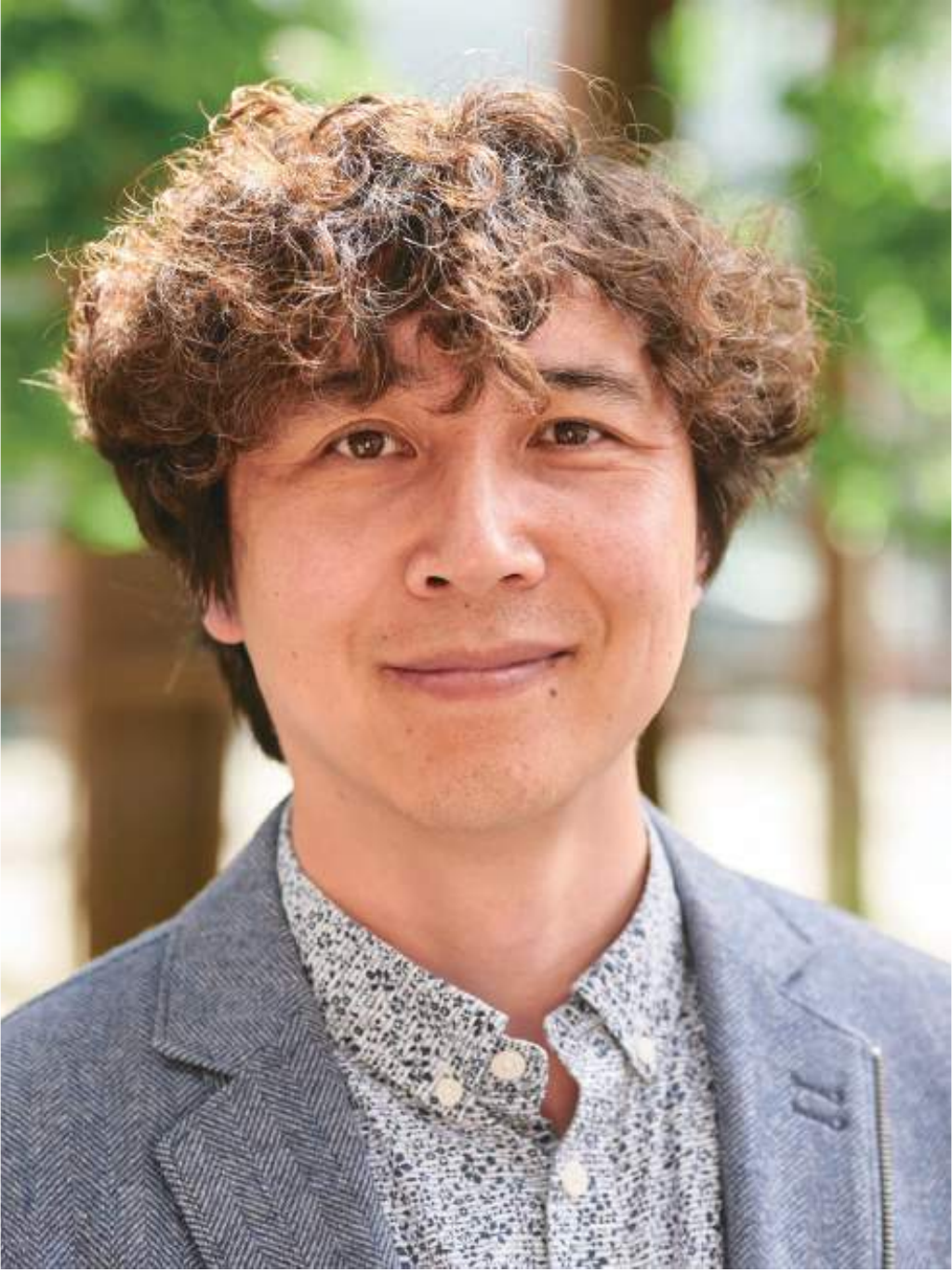}}]{Ming Cao} (Fellow, IEEE) has since 2016 been a professor of networks and robotics with the Engineering and Technology Institute (ENTEG) at the University of Groningen, the Netherlands, where he started as an assistant professor in 2008. Since 2022 he is the director of the Jantina Tammes School of Digital Society, Technology and AI at the same university. He received the Bachelor degree in 1999 and the Master degree in 2002 from Tsinghua University, China, and the Ph.D. degree in 2007 from Yale University, USA. From 2007 to 2008, he was a Research Associate at Princeton University, USA. He worked as a research intern in 2006 at the IBM T. J. Watson Research Center, USA. He is the 2017 and inaugural recipient of the Manfred Thoma medal from the International Federation of Automatic Control (IFAC) and the 2016 recipient of the European Control Award sponsored by the European Control Association (EUCA). He is an IEEE fellow. He is a Senior Editor for Systems and Control Letters, an Associate Editor for IEEE Transactions on Automatic Control, IEEE Transaction of Control of Network Systems and IEEE Robotics \& Automation Magazine, and was an associate editor for IEEE Transactions on Circuits and Systems and IEEE Circuits and Systems Magazine. He is a member of the IFAC Council and a vice chair of the IFAC Technical Committee on Large-Scale Complex Systems. His research interests include autonomous robots and multi-agent systems, complex networks and decision-making processes.
\end{IEEEbiography}


\begin{thebibliography}{1}
\bibliographystyle{IEEEtran}



%\bibitem{ref7}
%A. Levant, ``Exact differentiation of signals with unbounded higher derivatives,''  in \textit{Proc. 45th IEEE Conf. Decis.
%Control}, San Diego, CA, USA, 2006, pp. 5585--5590. DOI: 10.1109/CDC.2006.377165.

%\bibitem{ref8}
%M. Fliess, C. Join, and H. Sira-Ramirez, ``Non-linear estimation is easy,'' \textit{Int. J. Model., Ident. Control}, vol. 4, no. 1, pp. 12--27, 2008.



\bibitem{Ferber1999} 
J. Ferber and G. Weiss, {\it{Multi-agent systems: an introduction to distributed artificial intelligence}}. Reading: Addison-wesley, 1999.


\bibitem{KrausAI1977}
S. Kraus, ``Negotiation and cooperation in multi-agent environments,'' \textit{Artif. Intell.}, vol. 4, no. 1, vol. 94, no. 1, pp. 79--97,  1997.



\bibitem{MintzBCHAOS25} 
B. Mintz and F. Fu, ``Evolutionary multi-agent reinforcement learning in group social dilemmas,''\textit{ Chaos}, vol. 35, no. 2, pp. 023140, 2025.

\bibitem{Nguyencyb20} 
T. T. Nguyen, N. D. Nguyen, and S. Nahavandi, ``Deep reinforcement learning for multiagent systems: A review of challenges, solutions, and applications,''\textit{IEEE Trans. Cybern.}, vol. 50, no. 9, pp. 3826–3839, 2020.




\bibitem{VincentCUP} 
T. L. Vincent and J. S. Brown, {\it{Evolutionary Game Theory, Natural Selection, and Darwinian Dynamics}},   Cambridge University Press, 2005.


\bibitem{Tangcyb14} 
C. Tang, A. Li, and X. Li, ``When reputation enforces evolutionary cooperation in unreliable MANETs,''\textit{IEEE Trans. Cybern.}, vol. 45, no. 10, pp. 2190--2201, 2014.




\bibitem{AxelrodBBNY} 
R. M. Axelrod, {\it{The Evolution of Cooperation}},   Basic Books: New York, 2006.



\bibitem{RandTCS} 
D. G. Rand and  M. A. Nowak, ``Human cooperation,''\textit{ Trends Cogn. Sci.}, vol. 17, no. 8, pp. 413--425, 2013.



\bibitem{Tangtnse24} 
C. Tang, B. Yang, Y. Zhang, F. Lin, and G. Chen, ``Cooperation and optimization of multi-pool mining game with zero determinant alliance,''\textit{ IEEE Trans. Netw. Sci. Eng.}, vol. 11, no. 5, pp. 4965--4978, 2024.







\bibitem{RiehlARC18} 
J. Riehl, P. Ramazi, and M. Cao, ``A survey on the analysis and control
of evolutionary matrix games,''\textit{ Annu. Rev. Control}, vol. 45, pp. 87--106, 2018. 


\bibitem{LiTCNS18} 
C. Li, F. He, T. Liu, and D. Cheng, ``Verification and dynamics of group-based potential games,''\textit{ IEEE Trans. Control. Netw. Syst.}, vol. 6, no. 1, pp. 215--224, 2018.

\bibitem{Tancyb25} 
S. Tan, Y. Wang, Y. Chen, and Z. Wang, ``Evolutionary dynamics of collective behavior selection and drift: Flocking, collapse, and oscillation,''\textit{IEEE Trans. Cybern.}, vol. 47, no. 7, pp. 1694--1705, 2016.


\bibitem{Shicyb25} 
J. Shi, C. Chu, G. Fan, D. Hu, J. Liu, Z. Wang, and S. Hu, ``Payoff control in multichannel games: influencing opponent learning evolution,''\textit{IEEE Trans. Cybern.}, vol. 55, no. 2, pp. 776--785, 2025.



\bibitem{GlaubitzPNAS24} 
A. Glaubitz and F. Fu, ``Social dilemma of nonpharmaceutical interventions: Determinants of dynamic compliance and behavioral shifts,''\textit{ Proc. Nat. Acad. Sci.}, vol. 121, no. 50, pp. e2407308121, 2024.



\bibitem{Sunhb2022} 
Q. Su, A. McAvoy, Y. Mori, and J. B. Plotkin, ``Evolution of prosocial behaviours in multilayer populations,''\textit{ Nat. Hum. Behav.}, vol. 6, no. 3, pp. 338--348, 2022.




\bibitem{Zhangtnse25} 
Y. Zhang, J. Wang, G. Wen, J. Guan, S. Zhou, G. Chen, K.  Chatterjee, and M. Perc, ``Limitation of time promotes cooperation in structured collaboration systems,''\textit{ IEEE Trans. Netw. Sci. Eng.}, vol. 12, no. 1, pp. 4--12, 2025.



\bibitem{ShiJtnse24}                                        J. Shi, C. Liu, and J. Liu, ``Hypergraph-based model for modelling multi-agent Q-learning dynamics in public goods games,''\textit{ IEEE Trans. Netw. Sci. Eng.}, vol. 11, no. 6, pp. 6169--6179, 2024.





\bibitem{Dingcyb21} 
X. Ding, H. Li, J. Lu, and S. Wang, ``Optimal strategy estimation of random evolutionary Boolean games,''\textit{IEEE Trans. Cybern.}, vol. 52, no. 8, pp. 7899--7905, 2021.




\bibitem{HilbePNAS13} 
C. Hilbe, M. A. Nowak, and K. Sigmund, ``Evolution of extortion in
iterated prisoner’s dilemma games,''\textit{ Proc. Nat. Acad. Sci. USA}, vol. 110, no. 17, pp. 6913--6918, 2013.


\bibitem{Sunpnas2022} 
Q. Su, B. Allen, and J. B. Plotkin, ``Evolution of cooperation with asymmetric social interactions,''\textit{ Proc. Natl. Acad. Sci. USA}, vol. 119, no. 1, pp. e2113468118, 2022.


\bibitem{Smith1982} 
J. M. Smith, ``Evolution and the Theory of Games,'' {\it{ Cambridge University Press}}, 1982.


\bibitem{Zhutac2022} 
Y. Zhu, C. Xia, and Z. Chen, ``Nash equilibrium in iterated multiplayer games under asynchronous best-response dynamics,''\textit{ IEEE Trans. Automat. Contr.}, vol. 69, no. 9, pp.  5798--5805, 2023.



\bibitem{LiNC2020} 
A. Li, L. Zhou, Q. Su, S. P. Cornelius,  Y. Y. Liu,  L. Wang, and S. A. Levin, ``Evolution of cooperation on temporal networks,''\textit{ Nat. Commun.}, vol. 11, no. 1, pp. 2259, 2020.


\bibitem{Liucyb24} 
A. Liu, L. Wang, G. Chen, and X. Guan, ``Heterogeneously networked evolutionary games with intergroup conflicts,''\textit{IEEE Trans. Cybern.}, vol. 54, no. 10, pp. 5684--5695, 2024.



\bibitem{Gintis2009} 
H. Gintis, \emph{Game Theory Evolving},  Princeton University Press, 2009.

\bibitem{HanINTER15} 
T. A. Han, L. M. Pereira, and T. Lenaerts, ``Avoiding or restricting defectors in public goods games?''\textit{ J. R. Soc. Interface}, vol. 12, no. 103, pp. 20141203, 2015.


\bibitem{Sasaki2012} 
T. Sasaki, {\AA}. Br\"{a}nnstr\"{o}m, U. Dieckmann, and K. Sigmund, ``The take-it-or-leave-it option allows small penalties to overcome social dilemmas,''\textit{ Proc. Natl. Acad. Sci. USA}, vol. 109, no. 4, pp. 1165--1169, 2012.

\bibitem{SunTNSE2023} 
Z. Sun, X. Chen, and A. Szolnoki, ``State-dependent optimal incentive allocation protocols for cooperation in public goods games on regular networks,''\textit{ IEEE Trans. Network Sci. Eng.}, vol. 10, no. 6, pp. 3975--3988, 2023.


\bibitem{Riehl12018} 
J. R. Riehl, P. Ramazi, and  M. Cao, ``Incentive-based control of asynchronous best-response dynamics on binary decision networks,''\textit{ IEEE Trans. Control. Netw. Syst.}, vol. 6, no. 2, pp. 727--736, 2018.

\bibitem{Vasconcelos2013} 
V. V. Vasconcelos, F. C. Santos, and  J. M. Pacheco, ``A bottom-up institutional approach to cooperative governance of risky commons,''\textit{ Nat. Clim. Change}, vol. 3, no. 9, pp. 797--801, 2013.


\bibitem{Paarporn2018} 
K. Paarporn and C. Eksin, \emph{Incentive control in network anti-coordination games with binary types}, in 2018 52nd Asilomar Conference on Signals, Systems, and Computers, IEEE, pp. 316--320, 2018. 


\bibitem{Fang2019PRSA} 
Y. Fang, T. P. Benko, M. Perc, H. Xu, and Q. Tan, ``Synergistic third-party rewarding and punishment in the public goods game,''\textit{  Proc. R. Soc. A}, vol. 475, no. 2227, pp. 20190349, 2019.

\bibitem{WangCNSNS2019} 
S. Wang, X. Chen, and A. Szolnoki, ``Exploring optimal institutional incentives for public cooperation,''\textit{ Commun. Nonlinear Sci. Numer. Simul.}, vol. 79, pp. 104914, 2019.

\bibitem{Wang2022JRCS} 
S. Wang, X. Chen, Z. Xiao, A. Szolnoki, and V. V. Vasconcelos, ``Optimization of institutional incentives for promoting cooperation in structured populations,''\textit{ J. R. Soc. Interface}, vol. 20, no. 199, pp. 20220653, 2023.


\bibitem{Wang2025TAC} 
S. Wang, M. Cao,  and X. Chen, ``Optimally combined incentive for cooperation among interacting agents in population games,''\textit{IEEE Trans. Automat. Contr.}, vol. 70(7), pp. 4562-4577, 2025.



\bibitem{WangG2024PNAS} 
	G. Wang, Q. Su, L. Wang, and B. Plotkin, ``The evolution of social behaviors and risk preferences in settings with uncertainty,''\textit{Proc. Natl. Acad. Sci. USA}, vol. 121(30), pp. e2406993121, 2024.

\bibitem{FujimotoY2023PNAS} 
	Y. Fujimoto and H. Ohtsuki, ``Evolutionary stability of cooperation in indirect reciprocity under noisy and private assessment,''\textit{Proc. Natl. Acad. Sci. USA}, vol. 120(20), pp. e2300544120, 2023.


\bibitem{WangX2023NC} 
	X. Wang, L. Zhou, A. McAvoy, and et al, ``Imitation dynamics on networks with incomplete information,''\textit{Nat. Commun.}, vol. 14(1), pp. 7453, 2023.

\bibitem{Schmid2023NC} 
	L. Schmid, F. Ekbatani, C. Hilbe, and et al, ``Quantitative assessment can stabilize indirect reciprocity under imperfect information,''\textit{Nat. Commun.}, vol. 14(1), pp. 2086, 2023.

\bibitem{Zhang2024eLifE} 
	Z. Zhang, H. Wang, T. Zhang, and et al, ``Perceptual error based on Bayesian cue combination drives implicit motor adaptation,''\textit{eLife}, vol. 13, pp. RP94608, 2024.


\bibitem{Barfuss2022PRE} 
	W. Barfuss  and R. P. Mann, ``Modeling the effects of environmental and perceptual uncertainty using deterministic reinforcement learning dynamics with partial observability,''\textit{Phys. Rev. E}, vol. 105(3), pp. 034409, 2022.

\bibitem{Lupre2024} 
Z. Lu, S. Hua, L. Wang, and L. Liu, ``Hybrid reward-punishment in feedback-evolving game for common resource governance,''\textit{ Phys. Rev. E}, vol. 110, pp. 034301, 2024.

\bibitem{Schmalensee2017} 
	R. Schmalensee and R. N. Stavins, ``Lessons learned from three decades of experience with cap and trade,''\textit{Rev. Environ. Econ. Polic.}, vol. 11, no. 1, pp. 59--79, 2017. 




\bibitem{Duong2021} 
M. H. Duong  and T. A. Han, ``Cost efficiency of institutional incentives for promoting cooperation in finite populations,''\textit{ Proc. R. Soc. A}, vol. 477, no. 2254, pp. 20210568, 2021.


\bibitem{Duong24BMB} 
M. H. Duong, C. M. Durbac, and T. A. Han, ``Cost optimisation of individual-based institutional reward incentives for promoting cooperation in finite populations,''\textit{ Bull. Math. Biol.}, vol. 86, no. 9, pp. 115, 2024.




\bibitem{ChenInterface2015} 
X. Chen, T. Sasaki, {\AA}. Br\"{a}nnstr\"{o}m, and  U. Dieckmann, ``First carrot, then stick: how the adaptive hybridization of incentives promotes cooperation,''\textit{ J. R. Soc. Interface}, vol. 12, no. 102, pp. 20140935, 2015.



\bibitem{Alventosa2021} 
A. Alventosa, A. Antonioni, and P. Hern\,{a}ndez, ``Pool punishment in public goods games: How do sanctioners incentives affect us?''\textit{J. Econ. Behav. Organ.}, vol. 185, pp. 513--537, 2021.


\bibitem{Camerer1992} 
C. Camerer, ``The rationality of prices and volume in experimental markets,''\textit{ Organ. Behav. Hum. Dec.}, vol. 51, no. 2, pp. 237--272, 1992.

\bibitem{Patel2002} 
V. L. Patel, D. R. Kaufman, and J. F. Arocha, ``Emerging paradigms of cognition in medical decision-making,''\textit{ J. Biomed. Inform.}, vol. 35, no. 1, pp. 52--75, 2002.



\bibitem{Evans2005} 
L. C. Evans, ``An Introduction to Mathematical Optimal Control Theory,''  {\it{ University of California Press}}, 2005.


\bibitem{Geering2007} 
H. P. Geering, ``Optimal Control with Engineering Applications,'' {\it{ Springer}}, 2007.


\bibitem{Schuster1983} 
P. Schuster and K. Sigmund, ``Replicator dynamics,''\textit{ Journal of Theoretical Biology}, vol. 100, no. 3, pp. 533--538, 1983.

\bibitem{Hofbauer1998} 
J. Hofbauer and K. Sigmund, ``Evolutionary Games and Population Dynamics,'' {\it{ Cambridge University Press}}, 1998.

\bibitem{Szabo1998} 
G. Szab\'{o} and C. T\H{o}ke, ``Evolutionary prisoner's dilemma game on a square lattice,''\textit{Phys. Rev. E}, vol. 58, no. 1, pp. 69, 1998.

\bibitem{Traulsen2005} 
A. Traulsen, J. C. Claussen, and C. Hauert, ``Coevolutionary dynamics: from finite to infinite populations,''\textit{ Phys. Rev. Lett.}, vol. 95, no. 23, pp. 238701, 2005. 

\bibitem{Traulsen2009} 
A. Traulsen and C. Hauert, ``Stochastic evolutionary game dynamics,''\textit{Rev. Nonlin. Dyn. Complex}, vol. 2, pp. 25--61, 2009.

\bibitem{Chen2018ICML} 
Z. Chen, V. Badrinarayanan, C. Y. Lee, and A. Rabinovich, ``Gradnorm: Gradient normalization for adaptive loss balancing in deep multitask networks,'' \textit{in International Conference on Machine Learning}, pp.794--803, 2018.

\bibitem{Spitzer2025MS} 
	F. Spitzer, K. Abstiens, and S. Karmasin, ``Integrating behavioural insights in the policy process: on chances and hurdles identified by policy-makers and behavioural scientists,''\textit{Mind Soc.}, vol. 24, no. 2, pp. 621--663, 2025.


\bibitem{Ferguson2022cdc} 
	B. L. Ferguson, P. N. Brown, and J. R. Marden, ``Avoiding unintended consequences: How incentives aid information provisioning in Bayesian congestion game,''\textit{In 2022 IEEE 61st Conference on Decision and Control (CDC)}, pp. 3781--3786, 2022.


\bibitem{Mann2017PNAS} 
	R. P. Mann and D. Helbing, ``Optimal incentives for collective intelligence,''\textit{Proc. Natl. Acad. Sci. USA}, vol. 114, no. 20, pp. 5077--5082, 2017.


\bibitem{WeiHCYB26} 
	H. Wei, J. Zhang, C. Zhang, and M. Cao, ``Indirect reciprocity enhances collective cooperation on weighted networks,''\textit{ IEEE Trans. Cybern.}, vol. 56, no. 2, pp. 1141--1152, 2026.


\bibitem{ZhuYHCYB26} 
	Y. Zhu, Z. Zhang, C. Xia, X. Li, and Z. Chen, ``Finite Strategy Switches of Coordinating and Anti-Coordinating Games on Weighted Networks,''\textit{ IEEE Trans. Cybern.}, vol. 56, no. 1, pp. 446--459, 2026. 

\bibitem{LiuAYHCYB26} 
	A. Liu, L. Wang, G. Chen, and X. Guan, ``Heterogeneously networked evolutionary games with intergroup conflicts,''\textit{ IEEE Trans. Cybern.}, vol. 56, no. 10, pp. 5684--5695, 2024. 


\bibitem{BarabSCi1999} 
	A.~L.~Barab\'{a}si and R.~Albert, ``Emergence of scaling in random networks,''\textit{Science}, vol. 286, no. 5439, pp. 509--512, 1999.

\bibitem{WattsNat1998} 
	D.~J.~Watts and  S.~H.~Strogatz, ``Collective dynamics of `small-world' networks,''\textit{Nature}, vol. 393, no. 6684, pp. 440--442, 1998.

\bibitem{WangTNNLS2025} 
	Q. Wang, X. Chen, N. He, and A. Szolnokiz, ``Evolutionary dynamics of population games with an aspiration-based learning rule,''\textit{IEEE Trans. Neural Netw. Learn. Syst.}, vol. 36, no. 5, pp. 8387--8400, 2025.



\bibitem{Ohtsuk2006Nature} 
	H. Ohtsuki, C. Hauert,  E. Lieberman, and M. A. Nowak, ``A simple rule for the evolution of cooperation on graphs and social networks,''\textit{Nature}, vol. 441, no. 7092, pp. 502--505, 2006.




\bibitem{VanKampen1992} 
N. G. Van Kampen, {\it{Stochastic Processes in Physics and Chemistry}},   Elsevier, 1992.


\bibitem{Ohtsuk2006Nature} 
H. Ohtsuki, C. Hauert,  E. Lieberman, and M. A. Nowak, ``A simple rule for the evolution of cooperation on graphs and social networks,''\textit{Nature}, vol. 441, no. 7092, pp. 502--505, 2006.





\end{thebibliography}
\end{document}